\newtheorem{theorem}{Theorem}
\newtheorem{lemma}[theorem]{Lemma}
\newtheorem{corollary}[theorem]{Corollary}
\newtheorem*{consistency-lemma}{Lemma \ref{lem:d2-set-con-is1}}
\theoremstyle{definition}
\newtheorem{definition}[theorem]{Definition}
\newtheorem*{fact}{Fact}
\theoremstyle{remark}
\newtheorem*{remark}{Remark}
\newtheorem{question}{Question}
\newcommand{\N}{\mathbb{N}}
\newcommand{\X}{\mathcal{X}}
\newcommand{\eb}[2]{\exists #1 \! \le \! #2 \,}
\newcommand{\ab}[2]{\forall #1 \! \le \! #2 \,}
\newcommand{\es}[2]{\exists #1 \! < \! #2 \,}
\newcommand{\as}[2]{\forall #1 \! < \! #2 \,}
\newcommand{\RCA}{\mathrm{RCA}}
\newcommand{\ACA}{\mathrm{ACA}}
\newcommand{\WKL}{\mathrm{WKL}}
\newcommand{\RT}{\mathrm{RT}}
\newcommand{\ind}{\mathrm{I}}
\newcommand{\bd}{\mathrm{B}}
\newcommand{\PA}{\mathrm{PA}}
\newcommand{\IB}{\mathrm{IB}}
\newcommand{\RR}{\mathrm{R}}
\newcommand{\tuple}[1]{\langle#1\rangle}
\newcommand{\gn}[1]{\ulcorner#1\urcorner}
\newcommand{\Cod}{\mathrm{Cod}}
\newcommand{\Con}{\mathrm{Con}}
\newcommand{\Sat}{\mathrm{Sat}}
\newcommand{\Ack}{\mathrm{Ack}}
\title{How strong is Ramsey's theorem \\ if infinity can be weak?}
\author{Leszek Aleksander Ko{\l}odziejczyk\thanks{Institute of Mathematics, University of Warsaw, \texttt{\{lak, katarzyna.kowalik\}@mimuw.edu.pl}. Partially supported by grant no.~2017/27/B/ST1/01951 of the National Science Centre, Poland.}, \\ Katarzyna W.~Kowalik\footnotemark[1], \\ Keita Yokoyama\thanks{School of Information Science, Japan Advanced Institute of Science and Technology, \texttt{y-keita@jaist.ac.jp}. Partially supported by JSPS KAKENHI grant no.~19K03601.}}
\begin{document}
\maketitle

\begin{abstract}
We study the first-order consequences of Ramsey's Theorem for $k$-colourings of $n$-tuples, for fixed $n, k \ge 2$,
over the relatively weak second-order arithmetic theory $\RCA^*_0$. 
Using the Chong-Mourad coding lemma,
we show that in a model of $\RCA^*_0$ that does not satifsfy $\Sigma^0_1$ induction,
$\RT^n_k$ is equivalent to its relativization to any proper $\Sigma^0_1$-definable
cut, so its truth value remains unchanged in all extensions of the model with the same first-order universe.

We give a complete axiomatization of the first-order consequences of $\RCA^*_0 + \RT^n_k$ for $n \ge 3$.
We show that they form a non-finitely axiomatizable subtheory of $\PA$ whose
$\Pi_3$ fragment coincides with $\bd \Sigma_1 + \exp$ and whose
$\Pi_{\ell +3}$ fragment for $\ell \ge 1$ lies between 
$\ind \Sigma_\ell \Rightarrow \bd \Sigma_{\ell +1}$ and $\bd \Sigma_{\ell +1}$. 
We also give a complete axiomatization 
of the first-order consequences of $\RCA^*_0 + \RT^2_k + \neg \ind \Sigma_1$.
In general, we show that the first-order consequences of $\RCA^*_0 + \RT^2_k$
form a subtheory of $\ind \Sigma_2$ whose $\Pi_3$ fragment coincides
with $\bd \Sigma_1 + \exp$ and whose $\Pi_4$ fragment is strictly weaker
than $\bd \Sigma_2$ but not contained in $\ind \Sigma_1$.

Additionally, we consider a principle $\Delta^0_2$-$\RT^2_2$ which is defined
like $\RT^2_2$ but with both the $2$-colourings and the solutions allowed to be
$\Delta^0_2$-sets rather than just sets. We show that the behaviour of
$\Delta^0_2$-$\RT^2_2$ over $\RCA_0 + \bd\Sigma^0_2$ is in many ways 
analogous to that of $\RT^2_2$ over $\RCA^*_0$, and that 
$\RCA_0 + \bd \Sigma^0_2 + \Delta^0_2$-$\RT^2_2$
is $\Pi_4$- but not $\Pi_5$-conservative over $\bd \Sigma_2$.
However, the statement we use to witness failure of
$\Pi_5$-conservativity is not provable in $\RCA_0 +\RT^2_2$.
\end{abstract}

Over the last two decades, much of the research in reverse mathematics has concerned the logical strength of various
principles from Ramsey theory. One of the challenging problems in this area has been to 
characterize the first-order consequences of Ramsey's Theorem for pairs. 
Despite significant progress (e.g.~\cite{cholak-jockusch-slaman, csy:ind-strength-ramsey, py:rt22}), 
this remains open. In particular, it is not known 
whether Ramsey's Theorem for pairs and a fixed number of colours is $\Pi^1_1$ conservative
over the $\Sigma^0_2$ collection scheme.

In this paper, we study the first-order strength of Ramsey's Theorem -- both for pairs
and for longer tuples of fixed length -- over a weaker base theory than the one normally used
in reverse mathematics. Our base theory, $\RCA^*_0$,  differs from the usual system $\RCA_0$ 
in that the $\Sigma^0_1$ induction axiom of the latter is replaced by induction for bounded formulas only.

The study of $\RCA^*_0$ was initiated in \cite{simpson-smith} 
and continued in a number of later papers, 
e.g.~\cite{hatzikiriakou:algebraic-disguises, sy:peanocat, ky:categorical, enayat-wong}.
In the context of Ramsey theory, it is important that $\Sigma^0_1$ 
induction is needed to show that each infinite set has arbitrarily large finite subsets. Hence,
over $\RCA^*_0$ the infinite homogeneous sets witnessing various
principles might be so sparse that they have ``strictly smaller cardinality'' than $\N$,
so the principles can become weaker.
Indeed, Yokoyama \cite{yokoyama:rt-rca*0} showed that for each fixed $n,k$, $\RCA^*_0$ 
extended by Ramsey's Theorem for $n$-tuples and $k$ colours, $\RT^n_k$,
is $\Pi_2$-conservative over $\ind\Delta_0 + \exp$. 
We are able to go quite a bit beyond that result.

Recent work of Belanger \cite{belanger:coh} has demonstrated that the study of reverse mathematics over $\RCA^*_0$
is relevant to the traditional $\RCA_0$ framework as well.  
In fact, a large part of our original motivation for studying Ramsey's Theorem over $\RCA^*_0$
was the desire to understand whether it can help in understanding $\RT^2_2$ over $\RCA_0$.
The jury is still out on that. 
However, it has turned out that Ramsey theory in $\RCA^*_0$
is a highly interesting topic in its own right. 
It gives rise to new examples of principles that are
partially conservative but not $\Pi^1_1$-conservative over the base theory,
and it has intriguing connections to the model theory of first-order arithmetic.

After discussing the necessary background in a preliminary Section 1, we begin
the paper proper in Section 2 by proving that in models of $\RCA^*_0$ that
are \emph{not} models of $\RCA_0$, $\RT^n_k$ is equivalent
to its relativizations to $\Sigma^0_1$-definable cuts. 
One consequence of that result is that in some models of $\RCA^*_0$,
Ramsey's Theorem is computably true. This is not the case in the standard
model of arithmetic or in any other model of $\RCA_0$.

In Section 3, we use the equivalence from Section 2 to give an axiomatization
of the first-order consequences of $\RCA^*_0 +\RT^n_k$
where $n \ge 3$. In each case, this turns out
to be an unusual fragment of Peano Arithmetic that is $\Pi_3$- but
not $\Pi_4$-conservative over $\bd\Sigma_1+\exp$. 
Moreover, it is not contained in $\ind \Sigma_\ell$ for any $\ell$.

We then consider Ramsey's Theorem for pairs. We are not able to give a complete
axiomatization of its first-order consequences over $\RCA^*_0$, but 
in Section 4 we obtain some partial results. 
In particular, we do axiomatize these consequences over $\neg\ind\Sigma_1$.
We also show that $\RCA^*_0 + \RT^2_2$ is not conservative over (the lightface theory) $\ind\Sigma_1$. 

Then, in Section 5, we take a look at the question whether our results say
anything about Ramsey's Theorem for pairs over $\RCA_0$.
We consider a principle that can be viewed as a ``jumped version'' of $\RT^2_2$, 
and we show that it is not $\Pi_5$-conservative over $\RCA_0 + \bd\Sigma^0_2$. 
We also show that the most obvious sentence witnessing the lack of conservativity
is unprovable in $\RCA_0 +\RT^2_2$. 
However, the proof of unprovability, which is based on a possibly unexpected technique (proof speedup), 
no longer works for slightly weaker sentences.  

\section{Preliminaries}\label{sec:prelim}

We assume that the reader has some familiarity with fragments 
of second-order arithmetic, as described in \cite{simpson:sosoa} or \cite{hirschfeldt:slicing}. 
We also assume familiarity with some basic facts about first-order arithmetic
and its models -- most or all of the necessary information can be found 
in \cite{hirschfeldt:slicing}, and \cite{Kaye91} covers more than enough.

The symbol $\omega$ stands for the set of standard natural numbers. 
In contrast, $\N$ stands for the set of natural numbers as formalized in
the given theory we are studying -- in a nonstandard model, this
is the first-order universe of the model. 

Notation like $\Sigma^0_\ell$, $\Pi^0_\ell$ represents the usual formula classes defined
in terms of first-order quantifier alternations, but allowing second-order free variables.
On the other hand, notation without the superscript $0$, like $\Sigma_\ell$, $\Pi_\ell$,
represents analogously defined classes of first-order, or ``lightface'', formulas -- 
that is, without any second-order variables at all.
If we want to specify the second-order parameters appearing
in a $\Sigma^0_\ell$ formula, we use notation like $\Sigma_\ell(\bar X)$.
We extend these conventions to naming theories: thus, for example,
$\bd\Sigma^0_2$ is the fragment of second-order arithmetic
axiomatized by $\Delta^0_0$ induction and $\Sigma^0_2$ collection, 
whereas $\bd\Sigma_2$ is the fragment of first-order arithmetic
axiomatized by $\Delta_0$ induction and $\Sigma_2$ collection.

\begin{remark}
In formulating the results presented in the paper, we had to make the decision whether to state them in purely arithmetical, lightface, form, or in $\Pi^1_1$ form, allowing the appearance of (typically universally quantified) second-order parameters.
We opted to use the lightface version most of the time, with the tacit understanding that our results of the form ``first-order scheme $T$ implies first-order sentence $\psi$'' (as for instance Lemma \ref{lem:jockusch}) typically have a natural relativization of the form ``for all $X$, $T(X)$ implies $\psi(X)$'' that can be proved by essentially the same argument. On the other hand, we did allow second-order parameters whenever we found it advisable, for instance because it was necessary to state the result properly (as in Theorem \ref{thm:neg-is1-cs2}) or needed for later applications (as in the case of Theorem \ref{thm:rt-on-cut}). 
\end{remark}

Recall that for $\ell \ge 1$ the theory $\ind\Sigma_\ell$ proves (in fact, is equivalent to over $\ind \Delta_0$)
the scheme of \emph{strong $\Sigma_\ell$ collection}, that is,
\[\forall v \, \exists w\, \ab{x}{v} (\exists y\, \sigma(x,y) \Rightarrow \eb{y}{w} \sigma(x,y)),\]
where $\sigma(x,y)$ is a $\Sigma_\ell$ formula, possibly with parameters.

The theory $\RCA^*_0$ is obtained from $\RCA_0$ by weakening the $\ind\Sigma^0_1$ axiom
to $\bd\Sigma^0_1$ and adding the axiom $\exp$ that explicitly guarantees the totality of exponentiation.
The first-order consequences of $\RCA^*_0$ are axiomatized by $\bd\Sigma_1 + \exp$.

When we consider a model $(M,\X)$ of some fragment of second-order arithmetic 
(or simply work inside this fragment without reference to a specific model),
a \emph{set} is an element of the second-order universe, i.e.~an element of $\X$. In contrast,
a \emph{definable set} is any subset of $M$ that is definable in $(M,\X)$, but does not have to belong to $\X$.
A definable set is a \emph{$\Delta^0_\ell$-definable set}, or simply a \emph{$\Delta^0_\ell$-set} 
(resp., a \emph{$\Sigma^0_\ell$-definable set} or \emph{$\Sigma^0_\ell$-set}) 
if it happens to be definable by a $\Delta^0_\ell$ (resp.~$\Sigma^0_n$) formula. 
The notions of a \emph{$\Delta_\ell$-set} and \emph{$\Sigma_\ell$-set} are defined analogously.

Since most of the models we study only satisfy $\Delta^0_1$-comprehension, 
$\Delta^0_\ell$-sets for $\ell \ge 2$ and $\Sigma^0_\ell$-sets for $\ell \ge 1$ will not always be sets.
However, using appropriate universal formulas, 
we can quantify over $\Delta^0_\ell$- or over $\Sigma^0_\ell$-sets using second-order quantifiers 
(e.g.~``for every $X$, and every equivalent pair of a $\Sigma^0_\ell(X)$ and a $\Pi^0_\ell(X)$ formula, ...''). 
On the other hand, quantification over $\Delta_\ell$- or over $\Sigma_\ell$-sets is first-order.
We write $\Delta_{\ell}$-$\mathrm{Def}(M)$ (resp.~$\Delta^0_{\ell}$-$\mathrm{Def}(M,\X)$) for the
collection of $\Delta_\ell$-definable subsets of $M$ (resp.~the subsets of $M$ that are
$\Delta^0_\ell$-definable in $(M,\X)$).

For $\ell \ge 1$, let $\Sat_\ell(x,y)$ be the usual universal $\Sigma_\ell$ formula
and let $\Sat_\ell(x,y,X)$ be the usual universal $\Sigma^0_\ell$ formula with the unique second-order variable $X$.
Then $0^{(\ell)}$ is the $\Sigma_\ell$ definable set $\{e: \Sat_\ell(e,e)\}$; we write $0'$ for $0^{(1)}$. Similarly,
if $A$ is a set, then $A^{(\ell)}$ is $\{e: \Sat_\ell(e,e,A)\}$; this notion is generalized in a natural way to the case where
$A$ is merely a definable set. Note that $\bd \Sigma_\ell$ is enough to prove that 
$0^{(\ell+1)}$ and $(0^{(\ell)})'$ are mutually $\Delta_1$-definable.

For $n,k \in \omega$, $\RT^n_k$ stands for the usual formulation of Ramsey's Theorem for pairs
in second-order arithmetic: ``for every function $f \colon [\N]^n \to k$, there is an infinite
homogeneous set $H$ for $f$''. Importantly, ``$H$ is infinite'' is understood here as ``$H$ is unbounded'',
i.e.~for every $x \in \N$ there is $H \ni y \ge x$. If $\Sigma^0_1$ induction fails, this does not imply that
$H$ contains an $x$-element finite subset for every $x$. Ramsey's Theorem formulated in terms
of the latter notion is easily seen to imply $\ind \Sigma^0_1$ \cite{yokoyama:rt-rca*0}.

A \emph{cut} in a model of arithmetic $M$ is any subset $I \subseteq M$ 
which contains $0$ and is closed downwards and under successor; 
note that if $I \neq M$, it will never be a ``set'' in the sense of belonging 
to whatever second-order arithmetic structure there might be on $M$. 
A \emph{definable cut} is a cut that happens to be a definable set. 
If $(M,\X) \models \RCA^*_0$, and $I$ is a $\Sigma^0_1$-definable
cut in $M$, then there is an infinite set $A \in \X$ of cardinality $I$,
i.e.~$A = \{a_i: i \in I\}$ enumerated in increasing order.

For an element $s$ of a model $M$, $(s)_\Ack$ stands for $\{a \in M: M \models a \in_\Ack s\}$, where $\in_\Ack$
is the usual Ackermann interpretation of set theory in arithmetic 
(``the $a$-th bit in the binary notation for $s$ is 1''). 
Given a proper cut $I \subseteq M$, 
the collection $\Cod(M/I)$ of \emph{subsets of $I$ coded in $M$}
is $\{(s)_\Ack \cap I: s \in M\}$. 
If $M$ satisfies induction for any of the classes of formulas $\Gamma$ that we consider in this paper, 
this will coincide with $\{A \cap I: A \textrm{ a } \Gamma\textrm{-definable subset of } M\}$.

The collection $\Cod(M/\omega)$ is commonly referred 
to as the \emph{standard system} of $M$ and denoted by
$\mathrm{SSy}(M)$. The following combination 
of standard model-theoretic facts discussed in \cite{Kaye91}
and well-known results on $\RT^n_k$ presented e.g.~in \cite{hirschfeldt:slicing}
will often be used without notice.
\begin{fact}
Let $\mathcal{S} \subseteq \mathcal{P}(\omega)$ be such that $(\omega, \mathcal{S}) \models \WKL_0$
(such a family $\mathcal{S}$ is known as a \emph{Scott set}). 
If $\mathcal{S}$ is countable, then for every $\ell \ge 1$ there exists a model $M\models \bd\Sigma_{\ell}$ 
such that $\omega$ is $\Sigma_{\ell}$-definable in $M$ and $\mathrm{SSy}(M)=\mathcal{S}$.

For each fixed $n \ge 2$, there exist countable Scott sets $\mathcal{S}_1$ and $\mathcal{S}_2$ such that
$(\omega, \mathcal{S}_1)\models \RT^{n}_{2}$ and $(\omega, \mathcal{S}_1)\not\models \RT^{n}_{2}$.
\end{fact}

We will sometimes want to abuse notation and use $\Cod(M/I)$ for the collection of binary (as opposed to unary)
relations on $I$ coded in $M$, that is for $\{(s)_\Ack \cap \{\tuple{i,j}:i,j \in I\}: s \in M\}$ where $\tuple{\cdot,\cdot}$
is the usual Cantor pairing function. If $I$ is not closed under multiplication, then such binary relations 
might not be elements of $\Cod(M/I)$ in the strict sense, but that should not lead to any confusion.

We define the iterated exponential function $\exp_n(x)$ by: $\exp_0(x) = x$, and $\exp_{n+1}(x) = 2^{\exp_n(x)}$.

\section{Characterization in terms of cuts}\label{sec:cuts}

In this section, we prove a basic result which underlies 
our subsequent analysis of Ramsey's Theorem over $\RCA^*_0$: 
if $\Sigma^0_1$ induction fails but $\Sigma^0_1$ collection holds, 
then Ramsey's Theorem is equivalent to its own
relativization to a proper $\Sigma^0_1$-definable cut.
To prove this, we make use of an important fact 
about coding sets in models of collection.
 
\begin{lemma}[\cite{chong-mourad:degree-cut}] 
Let $(M,\X) \models \RCA^*_0 + \bd\Sigma^0_n$.
Then for every pair of bounded disjoint $\Sigma^0_n$-definable sets $X,Y \subseteq M$
there exists $A \in \X$ such that $A \cap (X \cup Y) = X$. 
\end{lemma}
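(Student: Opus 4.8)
The plan is to use the disjointness of $X$ and $Y$ to reduce the questions ``$x\in X$?'' and ``$x\in Y$?'' to a single ``decision stage'' and then feed this to $\bd\Sigma^0_n$-collection. First I would normalize: fix $b\in M$ with $X,Y\subseteq[0,b)$, and write $x\in X\iff\exists u\,\varphi(x,u)$ and $x\in Y\iff\exists u\,\psi(x,u)$ with $\varphi,\psi$ being $\Pi^0_{n-1}$ (with set parameters). Replacing $\varphi(x,u)$ by $(\exists u'\le u)\,\varphi(x,u')$ — which remains $\Pi^0_{n-1}$ after the routine quantifier exchange $(\exists u'\le u)(\forall v)\sigma\equiv(\forall t)(\exists u'\le u)\sigma(u',(t)_{u'})$ — I may assume $\varphi,\psi$ are monotone in $u$. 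Disjointness now reads: for every $x$, \emph{at most one} of $\varphi(x,\cdot),\psi(x,\cdot)$ is ever satisfied.

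The core is to produce a single $w\in M$ such that every $x\in(X\cup Y)\cap[0,b)$ has been \emph{decided} by stage $w$, i.e.\ $(\exists u\le w)(\varphi(x,u)\vee\psi(x,u))$. Granting this, put $A:=\{x<b:(\exists u\le w)\,\varphi(x,u)\}$. If $x\in X$ then $\psi(x,\cdot)$ is never satisfied, so the decision reached by stage $w$ is via $\varphi$ and $x\in A$; if $x\in Y$ then $\varphi(x,\cdot)$ is never satisfied, so $x\notin A$; hence $A\cap(X\cup Y)=X$. It remains to check $A\in\X$. For $n=1$ the defining formula is $\Delta^0_0$, so $A$ is a set by $\Delta^0_1$-comprehension; for $n\ge2$ I would carry out the whole argument ``one level up'', relative to a formula defining $0^{(n-1)}$ (equivalently, keeping $\Pi^0_{n-1}$ matrices throughout), using that $\bd\Sigma^0_n$ supplies precisely the collection needed there, and then extract $A$ by comprehension at that level; alternatively one can organise this as an induction on $n$.

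The main obstacle — and the real content of the lemma — is the claim about the uniform decision stage $w$. A direct appeal to $\Sigma^0_n$-collection over $[0,b)$ does \emph{not} work: to apply collection one needs every $x<b$, including those outside $X\cup Y$, to possess a bounded witness to some $\Sigma^0_n$ formula, but the only natural ``escape clause'' for $x\notin X\cup Y$ is the statement ``$x\notin X\cup Y$'' itself, which is $\Pi^0_n$ and hence too complex to fold into a $\Sigma^0_n$ matrix. This is exactly where disjointness must be used non-trivially, following Chong--Mourad: one argues by contradiction that, if no such $w$ existed, the decision stages realised by elements of $(X\cup Y)\cap[0,b)$ would be cofinal in $M$, and then the map sending such a stage $u$ to the least $x<b$ decided precisely at stage $u$ would be a definable injection of an unbounded set into the bounded set $[0,b)$ — which $\bd\Sigma^0_n$ forbids. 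Making the complexity bookkeeping in this step line up, so that the relevant map genuinely falls under the collection scheme available from $\bd\Sigma^0_n$ rather than requiring $\bd\Sigma^0_{n+1}$, is the delicate point, and is where both the disjointness of $X,Y$ and the boundedness of $X\cup Y$ are essential.
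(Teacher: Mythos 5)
The paper does not prove this lemma itself (it is quoted from Chong--Mourad), so I can only compare your sketch with the known argument. Your reduction to the two ``firing'' predicates $\varphi,\psi$ and the first-witness-wins idea are a reasonable starting point, but the step you yourself identify as the core --- the existence of a single stage $w$ by which every $x\in X\cup Y$ is decided --- is false in general, and the principle you invoke to prove it is not available. Concretely, take $(M,\X)\models\RCA^*_0+\neg\ind\Sigma^0_1$, let $A=\{a_i:i\in I\}\in\X$ be unbounded with $I$ a proper cut bounded by $b$, and let $X=\{i\in I: a_i \text{ is even}\}$, $Y=\{i\in I: a_i \text{ is odd}\}$ with the natural $\Sigma^0_1(A)$ definitions (``there is a finite increasing enumeration of the first $i+1$ elements of $A$ ending in an even/odd number''). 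These are bounded, disjoint and $\Sigma^0_1$-definable, but the least witness for $i$ is of size about $a_i$, and these are cofinal in $M$: no $w$ exists. More structurally, if your $w$ existed (say for $n=1$), then $X$ would simply \emph{equal} the $\Delta^0_1$ set $\{x<b:\exists u\le w\,\varphi(x,u)\}$ and would already lie in $\X$; the entire content of the lemma is to code sets $X$ for which this fails. Note also that when $\ind\Sigma^0_n$ \emph{does} hold, strong $\Sigma^0_n$ collection hands you $w$ immediately, so your argument only covers the case in which the lemma is easy.

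The contradiction you propose to derive the existence of $w$ rests on the claim that $\bd\Sigma^0_n$ forbids a definable injection of an unbounded set into a bounded set. That claim is essentially equivalent to $\ind\Sigma^0_n$, not a consequence of $\bd\Sigma^0_n$: the failure of $\Sigma^0_n$ induction in a model of $\bd\Sigma^0_n$ is witnessed precisely by a $\Delta^0_n$-definable bijection between a proper cut $I\subseteq[0,b)$ and an unbounded set (in the example above, $a_i\mapsto i$ is such an injection already for $n=1$). What $\bd\Sigma^0_n+\exp$ does give (Dimitracopoulos--Paris, as recalled in Section 4 of the paper) is the pigeonhole principle for $\Sigma^0_n$ maps that are total on a bounded \emph{interval} $[0,a+1]$; your map is defined on an unbounded set of stages, so that principle does not apply. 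The actual Chong--Mourad proof never bounds the witnesses globally: it works with the coded stage-by-stage approximations (for each $j$ in, say, the cut $X\cup Y=I$, collection over $[0,j]$ does yield a stage deciding all $x\le j$, so every proper initial segment of $X$ is coded) and then amalgamates these partial codes into a single $A\in\X$ by a genuinely delicate use of $\bd\Sigma^0_n$; this amalgamation, which your sketch replaces by the nonexistent uniform stage, is where the real work of the lemma lies, and for it you should consult the cited paper of Chong and Mourad.
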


\begin{corollary}\label{cor:chong-mourad-cut}
Let $(M,\X) \models \RCA^*_0 + \bd\Sigma^0_n$ and let 
$I \subseteq M$ be a proper cut in $M$.
If $X \subseteq I$ is such that both $X$ and $I \setminus X$ are $\Sigma^0_n$-definable sets,
then $X \in \Cod(M/I)$.
\end{corollary}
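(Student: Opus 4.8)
The plan is to read the statement off the preceding lemma (the Chong-Mourad Coding Lemma) almost directly. Since $I$ is a \emph{proper} cut, I would fix some $b \in M \setminus I$; downward closure of $I$ then forces every element of $I$ to lie below $b$, i.e.\ $I \subseteq [0,b)$. Consequently both $X$ and $I \setminus X$ are bounded (by $b$), they are obviously disjoint, and by hypothesis both are $\Sigma^0_n$-definable. As $(M,\X) \models \RCA^*_0 + \bd\Sigma^0_n$, the lemma applies to the pair $X$, $I \setminus X$ and yields a set $A \in \X$ with $A \cap (X \cup (I \setminus X)) = X$. Since $X \subseteq I$ we have $X \cup (I \setminus X) = I$, so in fact $A \cap I = X$.

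It then remains to replace the set $A \in \X$ by a single \emph{code} $s \in M$, since $\Cod(M/I)$ is by definition the collection of the sets $(s)_\Ack \cap I$. For this I would invoke the standard fact that $\RCA^*_0$ proves every bounded set to be coded by an element of $M$: the set $A \cap [0,b)$ is in $\X$ by $\Delta^0_0$-comprehension, and a routine $\Delta^0_0$-induction on $j \le b$ — using $\exp$ to supply the bound $2^b$ below which the candidate codes $\sum_{i < j,\, i \in A} 2^i$ all lie, so that the inductive statement is $\Delta^0_0$ with parameter $A$ — shows that $A \cap [0,b)$ is coded by some $s \in M$. Then $(s)_\Ack \cap I = A \cap [0,b) \cap I = A \cap I = X$, so $X \in \Cod(M/I)$.

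I do not expect a genuine obstacle here: the corollary is essentially a repackaging of the lemma, and the only points needing attention are (i) using properness of $I$ to make $X$ and $I \setminus X$ bounded, so that the lemma is applicable at all, and (ii) the distinction — real in $\RCA^*_0$, where $\Sigma^0_1$ induction can fail — between a set lying in $\X$ and a set being coded by a number. Point (ii) is dispatched by the bounded-coding fact above; note that this fact in fact shows $\Cod(M/I) = \{\, B \cap I : B \in \X \,\}$ over $\RCA^*_0$, so under that reading of $\Cod(M/I)$ the second paragraph becomes unnecessary and the corollary is immediate from the lemma.
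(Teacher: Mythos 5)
Your argument is correct and is exactly the routine deduction the paper has in mind (the paper in fact states the corollary without proof, treating it as immediate from the lemma): properness of $I$ bounds $X$ and $I\setminus X$, the lemma gives $A\in\X$ with $A\cap I=X$, and the standard $\Delta^0_0$-induction-plus-$\exp$ coding of bounded sets converts $A$ into a code $s$ with $(s)_\Ack\cap I=X$. Your closing remark that $\Cod(M/I)=\{B\cap I:B\in\X\}$ over $\RCA^*_0$ is also consistent with the paper's own comment in the preliminaries that coded subsets of $I$ coincide with restrictions of definable sets under enough induction.
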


\begin{theorem}\label{thm:rt-on-cut}
Let $(M, \X) \models \RCA^*_0$ and let $I \subseteq M$ be 
a $\Sigma^0_1$-definable proper cut in $M$. Then for every $n,k \in \omega$:
\begin{equation}\label{eqn:ramsey-characterization}
(M,\mathcal{X}) \models \mathrm{RT}^n_k \textrm{ iff } (I,\mathrm{Cod}(M/I)) \models \mathrm{RT}^n_k.
\end{equation}
\end{theorem}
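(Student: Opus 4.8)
The plan is to prove both implications by transferring colourings and homogeneous sets back and forth between $M$ and the cut $I$, using Corollary \ref{cor:chong-mourad-cut} as the main tool to turn definable subsets of $I$ into elements of $\Cod(M/I)$ (and vice versa). Throughout, fix a $\Sigma^0_1$ formula defining $I$; since $I$ is a proper cut and $(M,\X)\models\RCA^*_0$, we have $\bd\Sigma^0_1$ available, and $\Cod(M/I)$ is exactly the collection of $\Delta^0_1$-definable (equivalently, $\Sigma^0_1$-definable with $\Sigma^0_1$ complement) subsets of $I$, coded by elements of $M$.

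First I would prove the left-to-right direction: assume $(M,\X)\models\RT^n_k$ and let $f\colon[I]^n\to k$ be a colouring in $\Cod(M/I)$, say $f=(s)_\Ack\cap\{\langle\bar x\rangle:\bar x\in[I]^n\}$ for some $s\in M$. Using $s$ we can define, inside $M$, a colouring $\tilde f\colon[M]^n\to k$ that agrees with $f$ on $[I]^n$ (e.g.\ read off bits of $s$, defaulting to colour $0$ outside the coded region); this $\tilde f$ is a $\Delta^0_1(s)$-definable function on $[M]^n$ with values in the standard finite set $k$, hence by $\Delta^0_1$-comprehension it is a set in $\X$. Apply $\RT^n_k$ in $(M,\X)$ to get an infinite (unbounded in $M$) homogeneous set $H\in\X$ for $\tilde f$. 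Now $H\cap I$ is homogeneous for $f$, and it is a bounded $\Delta^0_1$-definable subset of $I$ (being $H$, a set, intersected with the $\Sigma^0_1$ cut $I$, whose complement in any initial segment is also $\Sigma^0_1$), so by Corollary \ref{cor:chong-mourad-cut} it lies in $\Cod(M/I)$. The one thing to check is that $H\cap I$ is \emph{unbounded in $I$}: since $I$ is a proper cut and $H$ is unbounded in $M$, for every $i\in I$ there is $y\in H$ with $y\ge$ (some fixed element of $M\setminus I$ above $i$), but that $y$ need not lie in $I$. This is the point where some care is needed: one uses that $I$ is closed under successor together with $\bd\Sigma^0_1$ to argue that $H$ must in fact meet $I$ cofinally --- concretely, for each $i\in I$ the $\Sigma^0_1$ statement ``$\exists y\in H\,(i\le y\wedge y\in I)$'' can be shown to hold, because otherwise $H$ would be bounded by the supremum of $I$-elements, contradicting unboundedness of $H$ in $M$ given that $I$ is a cut. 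Hence $(I,\Cod(M/I))\models\RT^n_k$.

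For the converse, assume $(I,\Cod(M/I))\models\RT^n_k$ and let $f\colon[M]^n\to k$ be a set in $\X$. Restricting $f$ to $[I]^n$ gives a colouring $f{\restriction}I$ which is in $\Cod(M/I)$ (it is $f$ intersected with the coded region $[I]^n$; as $f\in\X$, some $s\in M$ codes $f$, and $(s)_\Ack\cap[I]^n$ witnesses membership). By hypothesis there is an infinite homogeneous set $H\in\Cod(M/I)$ for $f{\restriction}I$, of some colour $c<k$. Fix $t\in M$ coding $H$, so $H=(t)_\Ack\cap I$ and $H$ is unbounded in $I$. We must produce a homogeneous set for $f$ that is unbounded in $M$. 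The idea is to use $H$ together with the enumeration: by the remark before the theorem, from the $\Sigma^0_1$ cut $I$ we get an increasing enumeration, and more to the point we can try to extend $H$ past $I$ greedily --- but the cleanest route is to observe that the $\Sigma^0_1$-definable set $G=\{x\in M: (t)_\Ack(x)=1\ \wedge\ \forall\bar z\in[(x\cap G)]^{n-1}\, f(\bar z,x)=c\}$, defined by $\Sigma^0_1$ recursion, agrees with $H$ below $\sup I$ and is homogeneous for $f$ with colour $c$; since $G$ is $\Sigma^0_1$-definable and contains the unbounded-in-$I$ set $H$, while $I$ is a cut, $G$ is unbounded in $M$, and $G$ together with the negation being handled by $\bd\Sigma^0_1$ is a $\Delta^0_1$-definable, hence (by $\Delta^0_1$-comprehension, after bounding via an application of $\bd\Sigma^0_1$) a set in $\X$. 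Then $G$ witnesses $\RT^n_k$ in $(M,\X)$.

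I expect the main obstacle to be the transfer of \emph{unboundedness} across the cut in both directions --- upward in the first part (an infinite homogeneous set in $M$ really does meet $I$ cofinally) and, more delicately, extending a homogeneous set that is unbounded in $I$ to one unbounded in $M$ in the second part, since naive greedy extension past $I$ requires enough induction to verify homogeneity at each step. The right framing, which I would make precise, is that $I$ being a proper cut forces any $\Sigma^0_1$-definable set containing an $I$-unbounded subset to be $M$-unbounded, and that $\bd\Sigma^0_1$ plus $\Delta^0_1$-comprehension suffice to convert the relevant $\Sigma^0_1$-definable homogeneous sets into genuine elements of $\X$ and of $\Cod(M/I)$; Corollary \ref{cor:chong-mourad-cut} is exactly what makes the latter conversion go through without any induction beyond $\bd\Sigma^0_1$.
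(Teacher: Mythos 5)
There is a genuine gap in both directions of your argument, and it is the same missing idea in each case: the paper's proof fixes at the outset an infinite (unbounded in $M$) set $A = \{a_i : i \in I\} \in \X$ enumerated in increasing order with index set exactly $I$ (such an $A$ exists because $I$ is a $\Sigma^0_1$-definable cut in a model of $\RCA^*_0$, as recalled in the preliminaries), and uses $A$ as the bridge between ``cofinal in $I$'' and ``unbounded in $M$''. In your left-to-right direction, you extend $f$ to $\tilde f$ on $[M]^n$ by defaulting to colour $0$ off the coded region and then take $H \cap I$. But an unbounded homogeneous set $H$ for $\tilde f$ may lie entirely above the cut --- for instance a tail $[a,\infty)$ with $a \in M \setminus I$ on which $\tilde f$ is constantly $0$ --- so $H \cap I$ can be empty. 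Your claim that otherwise ``$H$ would be bounded by the supremum of $I$-elements'' is false: $I$ has no supremum, but $M \setminus I$ is nonempty and can contain all of $H$. The paper avoids this by spreading $f$ over all of $M$ via the intervals $[a_i, a_{i+1})$, so that \emph{any} unbounded homogeneous set meets intervals indexed by cofinally many $i \in I$.

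The converse direction has two independent problems. First, the principle you rely on --- that a $\Sigma^0_1$-definable set with an $I$-cofinal subset must be unbounded in $M$ --- is false: $I$ itself is $\Sigma^0_1$-definable, cofinal in itself, and bounded above by every element of $M \setminus I$; no amount of $\bd\Sigma^0_1$ rescues this. Second, your $G$ is by construction contained in $(t)_\Ack$, which is bounded in $M$ (by roughly $t$), so $G$ can never witness $\RT^n_k$ in $(M,\X)$, whose notion of ``infinite'' is ``unbounded in $M$''. The deeper point is that restricting $f$ to $[I]^n$ is the wrong move: any homogeneous set extracted from the initial segment $I$ consists of small elements and is automatically bounded in $M$. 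The paper instead pulls $f$ back along the enumeration, setting $\hat f(i_1,\ldots,i_n) = f(a_{i_1},\ldots,a_{i_n})$, obtains a cofinal homogeneous $H \subseteq I$ in $\Cod(M/I)$, and pushes forward to $\{a_i : i \in H\}$, which is unbounded in $M$ precisely because $A$ is unbounded and $H$ is cofinal in $I$. Your use of Corollary \ref{cor:chong-mourad-cut} to move definable subsets of $I$ into $\Cod(M/I)$ is the right tool and matches the paper, but without the enumeration $A$ neither direction's unboundedness transfer can be made to work.
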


\begin{proof}
Let $(M, \mathcal{X})$ be a model of $\RCA^*_0$ and let $I \subseteq M$ be 
a $\Sigma^0_1$-definable proper cut. Let $A \in \X$ be an infinite subset of $M$
which can be enumerated in increasing order as~$\{a_i:i \in I\}$. 
We may assume that $0 \in A$. Fix standard $n, k$.

Suppose $(M, \mathcal{X}) \models \RT^n_k$. Let $f \colon [I]^n \to k$
be coded by $c \in M$. We can use $f$ to 
define a colouring $\check{f} \colon [A]^n \to k$ in the following way:
\[\check{f}(a_{i_1},\ldots,a_{i_n}) = f(i_1,\ldots,i_n).\]
In fact, it is easy to generalize the definition of $\check{f}$ to obtain a colouring of $[M]^n$,
which we will continue to call $\check{f}$:
\begin{equation*}
\check{f}(x_1,\ldots,x_n) = 
\begin{cases}
f(i_1,\ldots,i_n) & \textrm{if } i_1 < \ldots < i_n \in I \textrm{ are such that } \\
                       &    x_1 \in [a_{i_1},a_{i_1+1}), \ldots, 
                           x_n \in [a_{i_n},a_{i_n+1}), \medskip \\ 
0 & \textrm{if there are no such } i_1,\ldots,i_n.
\end{cases}
\end{equation*}

Note that $\check{f}$ is $\Delta_1(A,c)$-definable, so $\check{f} \in \X$. By $\RT^n_k$, there exists
an infinite $H \in \X$ homogeneous for $\check{f}$. By Corollary \ref{cor:chong-mourad-cut},
the $\Sigma_1(H)$-definable set 
\[\hat H = \{i \in I: H \cap [a_i,a_{i+1}) \neq \emptyset\}\] 
is in $\Cod(M/I)$. Clearly, $\hat H$ is cofinal in $I$ and homogeneous for $f$.

In the other direction, suppose $(I, \Cod(M/I)) \models \RT^n_k$. 
Consider a colouring $f \colon [M]^n \to k$. By Corollary \ref{cor:chong-mourad-cut},
the colouring $\hat f \colon [I]^n \to k$ given by
\[\hat{f}(i_1,\ldots,i_n) = f(a_{i_1},\ldots,a_{i_n}) \]
is in $\Cod(M/I)$. Since $(I, \Cod(M/I)) \models \RT^n_k$,
there is $\Cod(M/I) \ni H \subseteq I$ cofinal in $I$ and homogeneous for $\hat f$. 
Then the set $\check H = \{a_i: i \in H\}$ is in $\X$ and it is an infinite
subset of $M$ homogeneous for $f$. 
\end{proof}

\begin{remark}
Note that the left-hand side of the equivalence (\ref{eqn:ramsey-characterization}) 
in Theorem \ref{thm:rt-on-cut} does not depend
on the choice of the cut $I$, while the right-hand side does not depend on $\X$, 
as long as $I$ is $\Sigma^0_1$-definable in $(M,\X)$.
Thus, Theorem \ref{thm:rt-on-cut} means that over $\RCA^*_0$, once $\ind\Sigma^0_1$ fails,
Ramsey's Theorem becomes in some sense a first-order property.
In particular, it can be satisfied in some structures of the form 
$(M, \Delta_1$-$\mathrm{Def}(M))$
(``computably true in $M$''). We investigate this phenomenon
further in the next two sections of the paper.
\end{remark}

\section{Ramsey for triples and beyond}\label{sec:rt32}

We now use the characterization provided by Theorem \ref{thm:rt-on-cut}
to study the first-order consequences of $\RCA^*_0 + \RT^n_k$ for $n \ge 3$. 
We begin with the easy but useful observation that, just like over $\RCA_0$, the strength 
of Ramsey's Theorem for $n$-tuples does not increase if we consider a larger but fixed 
number of colours. 

\begin{lemma}
For each $n, k \ge 2$, $\RCA^*_0 \vdash (\RT^n_k \Leftrightarrow \RT^n_{k+1})$.
\end{lemma}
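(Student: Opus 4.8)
The plan is to prove the two implications $\RT^n_{k+1} \Rightarrow \RT^n_k$ and $\RT^n_k \Rightarrow \RT^n_{k+1}$ separately inside $\RCA^*_0$, mimicking the standard reverse-mathematics argument but being careful about which inductive principles are actually used. The first direction is trivial: a $k$-colouring of $[\N]^n$ is in particular a $(k+1)$-colouring (whose $k$-th colour class is empty), so an infinite homogeneous set for it as a $(k+1)$-colouring is automatically homogeneous for it as a $k$-colouring. This uses nothing beyond the definitions and is formalizable in $\RCA^*_0$.

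For the nontrivial direction, the plan is the usual colour-merging trick. Given $f \colon [\N]^n \to k+1$, define $g \colon [\N]^n \to k$ by identifying the last two colours: $g(\bar x) = f(\bar x)$ if $f(\bar x) < k-1$, and $g(\bar x) = k-1$ if $f(\bar x) \in \{k-1, k\}$. Note $g$ is $\Delta^0_1$-definable from $f$, hence is a genuine set in $\X$, so $\RT^n_k$ applies and yields an infinite homogeneous set $H$ for $g$. If $H$ is homogeneous for $g$ in a colour $i < k-1$, then $H$ is already homogeneous for $f$ in colour $i$, and we are done. Otherwise $H$ is homogeneous for $g$ in colour $k-1$, which means $f \restriction [H]^n$ takes only the two values $k-1$ and $k$. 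Now I restrict $f$ to $H$ and apply $\RT^n_2$: formally, let $H = \{h_0 < h_1 < \dots\}$ enumerated increasingly (this enumeration exists as a set because $H \in \X$), and define $h \colon [\N]^n \to 2$ by $h(\bar\imath) = 0$ if $f(h_{i_1}, \dots, h_{i_n}) = k-1$ and $h(\bar\imath) = 1$ if it equals $k$; again $h$ is $\Delta^0_1$ in $H$ and $f$, so $\RT^n_2$ (a case of $\RT^n_k$) gives an infinite homogeneous $H'$ for $h$, and then $\{h_i : i \in H'\}$ is an infinite subset of $H$ homogeneous for $f$.

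The one point that needs care — and the main obstacle — is whether this transfinite-looking nesting of applications is legitimate over $\RCA^*_0$, where infinite sets can be very sparse. Two things must be checked. First, $\RT^n_2$ and $\RT^n_k$ are both instances of the quantifier block ``$\RT^n_j$ for each fixed standard $j$'', so using them in sequence for fixed $n,k$ is unproblematic — there is no induction on the number of applications, just a finite (externally fixed) composition. Second, and more delicate: when $H \in \X$ is infinite in the sense of unbounded, is the increasing enumeration $\{h_i : i \in \N\}$ total, i.e.\ does it give an $h_i$ for every $i \in \N$? In general $\RCA^*_0$ does not prove this — a sparse infinite set might be enumerated only along a cut. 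But one does not need totality: what is needed is only that the induced set $\{h_i : i \in H'\}$ is unbounded in $\N$, and this follows because $H'$ is unbounded in $\N$ and the map $i \mapsto h_i$ is monotone with $h_i \ge i$, so for any bound $b$ there is $i \in H'$ with $i \ge b$, hence $h_i \ge b$ as well. So the final homogeneous set is unbounded and we are done. I would present the argument at roughly this level of detail, making the colour-merging explicit and remarking that the relevant sets are $\Delta^0_1$-definable (hence in $\X$) at each step, and noting the unboundedness bookkeeping in place of any appeal to $\Sigma^0_1$ induction.
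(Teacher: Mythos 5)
Your first direction and your colour-merging step are fine and match the paper's proof (the paper uses $g(\bar x)=\min(f(\bar x),k-1)$, which is the same merging). The problem is in the second application of Ramsey's theorem. You define $h(\bar\imath)$ in terms of $f(h_{i_1},\dots,h_{i_n})$, where $h_0<h_1<\dots$ is the increasing enumeration of $H$. As you yourself note, over $\RCA^*_0$ this enumeration may be defined only on a proper cut $I\subsetneq\N$. But then $h$ is a colouring of $[I]^n$, not of $[\N]^n$, so it is simply not an instance of $\RT^n_2$, and the principle cannot be applied to it. Your ``unboundedness bookkeeping'' ($h_i\ge i$, so the image of an unbounded $H'$ is unbounded) addresses the wrong end of the argument: it presupposes that $h_i$ is defined for arbitrarily large $i\in\N$, which is exactly what fails. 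Nor can you patch this by extending $h$ to all of $[\N]^n$ with a default colour: a homogeneous set $H'$ for the extended colouring could lie entirely above the cut $I$, in which case $\{h_i: i\in H'\}$ is empty.

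The paper avoids this by running the second colouring on \emph{points} rather than on indices: writing the first homogeneous set as $\{a_i:i\in I\}$, it colours a tuple $(x_1,\dots,x_n)\in[\N]^n$ by $f(a_{i_1},\dots,a_{i_n})-k+1$ whenever $x_j\in[a_{i_j},a_{i_j+1})$ for some $i_1<\dots<i_n\in I$, and by a default colour otherwise. Since $0\in A$ and $A$ is unbounded, every $x\in\N$ lies in exactly one such interval, so this is a genuine total $2$-colouring of $[\N]^n$, and any unbounded homogeneous set for it meets cofinally many intervals, hence induces an unbounded subset of $A$ homogeneous for $f$. Replacing your index colouring with this interval pullback is the missing idea; the rest of your argument then goes through.
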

\begin{proof}
Assume $\RCA^*_0 + \RT^n_k$ and let $f \colon [\N]^n \to k+1$.
Consider the colouring $g \colon [\N]^n \to k$ given by $g(\bar x) = \min(f(\bar x),k-1)$.
Let $A$ be an infinite homogeneous set for $g$ and let $\{a_i: i \in I\}$ 
be an increasing enumeration of $A$. (Here $I$ may be either a proper $\Sigma^0_1$-definable cut or $\N$,
depending on $A$.)

If $A$ is $j$-homogeneous for $g$ with $j < k-1$, then $A$ is also $j$-homogeneous for $f$,
so we are done. Otherwise, $A$ is $(k-1)$-homogeneous for $g$,
which means that $f{\upharpoonright}_{[A]^n}$ takes at most the two values
$k-1$ and $k$. Define a $2$-colouring of $[\N]^n$ by:
\begin{equation*}
\check{f}(x_1,\ldots,x_n) = 
\begin{cases}
f(a_{i_1},\ldots,a_{i_n}) - k + 1 & \textrm{if } i_1 < \ldots < i_n \in I \textrm{ are such that } \\
                       &    x_1 \in [a_{i_1},a_{i_1+1}), \ldots, 
                           x_n \in [a_{i_n},a_{i_n+1}), \medskip \\ 
0 & \textrm{if there are no such } i_1,\ldots,i_n.
\end{cases}
\end{equation*}
Let $H$ be an infinite homogeneous set for $\check{f}$. Then the set \[H':=\{a_i: i \in I \textrm{ and } H \cap [a_i,a_{i+1}) \neq \emptyset\}\] exists 
by $\Delta^0_1$-comprehension: it is clearly $\Sigma^0_1$-definable, and its
complement is the union of $\N \setminus A$ and the $\Sigma^0_1$-definable set 
$\{a_i:  \exists a \! \in \! A\,( a > a_i \textrm{ and } H \cap [a_i,a) = \emptyset)\}$. Moreover,
$H'$ is infinite and homogeneous for $f$.
\end{proof}

\begin{definition}
For $\ell \ge 1, n,k \ge 2$, let $\Delta_{\ell}$-$\RT^n_k$ be the first-order statement: 
``for every $\Delta_\ell$-definable $k$-colouring of $[\N]^n$,
there is a $\Delta_\ell$-definable infinite homogeneous set''.

Thus, a model $M$ satisfies $\Delta_{\ell}$-$\RT^n_k$ exactly if
$(M,\Delta_{\ell}$-$\mathrm{Def}(M)) \models \RT^n_k$. 
\end{definition}

It is well known that each $\Delta_{\ell}$-$\RT^n_k$ is false in the standard model. However, the usual
argument makes use of a nontrivial amount of induction.

\begin{lemma}\label{lem:jockusch}
For each $n \ge 2$:
\begin{enumerate}[(a)]
\item\label{it:jockusch-1} $\ind \Sigma_1$ proves that there is a $\Delta_1$-definable $2$-colouring of $[\N]^n$ with no $\Sigma_1$-definable infinite homogeneous set, 
\item\label{it:jockusch-general} for each $l \ge 1$, $\ind \Sigma_{\ell +1}$ proves that there is a $\Delta_{\ell}$-definable $2$-colouring of $[\N]^n$ with no $\Sigma_{\ell+1}$-definable infinite homogeneous set. 
\end{enumerate}
\end{lemma}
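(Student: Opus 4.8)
The plan is to reduce both parts to the classical effective Ramsey theory argument due to Jockusch, which produces, for any $n\ge 2$, a computable colouring of $[\N]^n$ whose infinite homogeneous sets compute $\emptyset^{(n-1)}$ (or at least are not $\Sigma_1$), and then to carry that argument out inside the appropriate induction theory, relativizing the construction by $n-2$ jumps for part (\ref{it:jockusch-general}). For part (\ref{it:jockusch-1}) with $n=2$, I would use the standard colouring: fix a $\Delta_1$-definable enumeration of $\emptyset'$, say $x\in\emptyset'$ iff $\exists s\, \theta(x,s)$ with $\theta$ bounded, and colour a pair $\{u,v\}$ with $u<v$ by $0$ if every $x<u$ that enters $\emptyset'$ does so with a witness $s<v$, and by $1$ otherwise. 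Then an infinite homogeneous set must be $1$-homogeneous is impossible for large elements (by an easy argument using that $\emptyset'$ restricted to any initial segment eventually stabilizes, which needs only enough induction to verify finitely many elements have entered), so it is $0$-homogeneous, and from such a set one reads off $\emptyset'$: $x\in\emptyset'$ iff $x$ enters with a witness below the second homogeneous element above $x$. Hence an infinite $\Sigma_1$-definable homogeneous set would make $\emptyset'$ a $\Sigma_1$-set, contradicting the $\Sigma_1$-definability of its complement, which fails under $\ind\Sigma_1$ (indeed $\ind\Sigma_1$ proves $\emptyset'$ is not $\Delta_1$). For $n>2$ one iterates the construction in the usual Jockusch fashion, at the cost of one extra quantifier per level, still landing inside $\Sigma_1$ and $\Delta_1$ since the base oracle is $\emptyset$.

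For part (\ref{it:jockusch-general}), the idea is simply to relativize everything to $\emptyset^{(\ell-1)}$: work over $\ind\Sigma_{\ell+1}$, which proves $\ind\Sigma_\ell$ and hence that $\emptyset^{(\ell)}$ behaves well and that $0^{(\ell)}$ and $(0^{(\ell-1)})'$ are mutually $\Delta_1$-definable over $\emptyset^{(\ell-1)}$ (here I would invoke the remark in the preliminaries that $\bd\Sigma_\ell$ already suffices for the mutual $\Delta_1$-definability of $0^{(\ell+1)}$ and $(0^{(\ell)})'$). Then the colouring of $[\N]^2$ above, with $\emptyset'$ replaced by $(0^{(\ell-1)})'\equiv_{\Delta_1} 0^{(\ell)}$ and ``$x<u$ enters with witness $<v$'' now meaning a $\Sigma_\ell$ condition relativized appropriately, becomes $\Delta_\ell$-definable: the predicate ``$x\in 0^{(\ell)}$'' is $\Sigma_\ell$ and the colouring only asks, for finitely many $x$ below a given number, whether a bounded search over witnesses succeeds, which with a $\emptyset^{(\ell-1)}$ oracle is genuinely $\Delta_\ell$. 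An infinite homogeneous set then computes $0^{(\ell)}$ relative to $\emptyset^{(\ell-1)}$ hence is at least as complicated as $0^{(\ell)}$, so it cannot be $\Sigma_{\ell+1}$-definable, because $\ind\Sigma_{\ell+1}$ proves $0^{(\ell)}$ is not $\Delta_{\ell+1}$ (equivalently, $0^{(\ell+1)}\not\le_{\Delta_{\ell+1}}\text{nothing}$). For general $n$, iterate the pair construction $n-1$ times starting from oracle $\emptyset^{(\ell-1)}$; each iteration adds one jump to the complexity of the eventual homogeneous set but the colouring itself stays $\Delta_\ell$ because it only ever queries the fixed oracle $\emptyset^{(\ell-1)}$ plus a bounded combinatorial condition.

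The main obstacle, and the reason the lemma is stated with the induction hypotheses it has, is verifying the combinatorial properties of the homogeneous sets \emph{with limited induction} — specifically, showing that an infinite homogeneous set for the Jockusch colouring is $0$-homogeneous (not $1$-homogeneous) and that it genuinely decides the relevant jump. The non-triviality is that one must argue that for each $x$, the ``entry'' of $x$ into $0^{(\ell)}$ is eventually decided, and that an unbounded homogeneous set stabilizes the colour; this requires induction on a $\Sigma_{\ell+1}$ (resp.\ $\Sigma_1$) formula roughly of the form ``there is a stage by which all of $\{0,\dots,x\}$ have settled'', which is exactly why $\ind\Sigma_{\ell+1}$ (resp.\ $\ind\Sigma_1$) is invoked. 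I would be careful to phrase this stabilization argument as a single instance of $\Sigma_{\ell+1}$-induction (or, using strong $\Sigma_\ell$ collection available from $\ind\Sigma_\ell$, reduce it to bounded reasoning over a single bound $w$), and to check that the iterated construction for $n>2$ does not secretly need more induction than the outermost level already provides. Everything else — the $\Delta_\ell$-definability bookkeeping for the colouring, the coding of the oracle out of the homogeneous set — is routine.
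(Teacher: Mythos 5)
Your plan for both parts rests on the jump‑coding colouring of \emph{pairs}: ``colour $\{u,v\}$ by $0$ iff every $x<u$ that enters $\emptyset'$ does so with a witness $<v$'', with the claim that every infinite homogeneous set is $0$-homogeneous and computes $\emptyset'$. This is where the argument breaks. As written, the colouring is not $\Delta_1$-definable: to evaluate $f(u,v)$ one must know, for each $x<u$ with no witness below $v$, whether $x\in\emptyset'$ at all, so $f$ is Turing-equivalent to $\emptyset'$ and only $\Delta_2$. If you repair it to the standard computable stage-comparison version ($f(u,v)=1$ iff some $x<u$ acquires a witness in the interval $(u,v]$), then infinite $1$-homogeneous sets can exist and can even be $\Delta_1$-definable (interleave $H$ with the enumeration of $\emptyset'$), so the dichotomy ``homogeneous $\Rightarrow$ $0$-homogeneous $\Rightarrow$ computes $\emptyset'$'' fails. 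More fundamentally, no $\Delta_1$-definable $2$-colouring of $[\N]^2$ can force all of its infinite homogeneous sets to compute $0'$ --- that is Seetapun's cone-avoidance theorem. Jump-coding into homogeneous sets works for triples (it is exactly what Lemma \ref{lem:rt-gives-jump} uses), but the present lemma reduces to the case $n=2$ (pad a colouring of pairs to a colouring of $n$-tuples by ignoring the last $n-2$ coordinates; iterating the construction, as you propose, would change the complexity of both the colouring and the excluded class). The same objection applies verbatim to your relativized argument for part (\ref{it:jockusch-general}).

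The correct route, and the one the paper takes, is not to code the jump but to \emph{diagonalize directly against the target class of definable sets}. For part (\ref{it:jockusch-general}) this is Jockusch's original construction: for each code $e$ of a $\Sigma_{\ell+1}$ set, the colouring ensures that any set with at least $2e+2$ elements is inhomogeneous; the only non-trivial use of induction is $\Sigma_{\ell+1}$-induction to show that a purported infinite $\Sigma_{\ell+1}$-definable homogeneous set really does contain $2e+2$ elements. For part (\ref{it:jockusch-1}) one cannot afford $\ind\Sigma_2$, so the paper runs a weaker, movable-marker version of the same diagonalization against $\Sigma_1$ formulas only: at stage $s$ each $\Sigma_1$ formula with code $e\le\lfloor (s-1)/2\rfloor$ that already has $2e+2$ elements $x<s$ with witnesses below $s$ gets two of them coloured differently against $s$, and $\ind\Sigma_1$ suffices to find the $2e+2$ smallest elements of the set and a suitable stage $s$ in the set, yielding the contradiction. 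You correctly identify that the induction hypotheses are there to extract enough elements from a hypothetical homogeneous set, but the construction you attach that observation to does not prove the lemma.
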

\begin{proof}Clearly, it is enough to prove the statement for $n = 2$.

The proof of \ref{it:jockusch-general} is just a formalization
of the usual proof due to \cite{jockusch:ramsey} in $\ind\Sigma_{\ell+1}$.
The place where $\Sigma_{\ell +1}$-induction is used is when
we are given a hypothetical $\Delta_{\ell+1}$-definable infinite homogeneous 
set with code $e$, and we want to reach a contradiction by looking at the
first $2e+2$ elements of this set. To do this, we need to know that
the set actually has at least $2e+2$ elements, and this is justified by proving
``for every $x$, the $\Delta_{\ell+1}$-set with code $e$ has a finite subset with at least $x$ elements''
by induction on $x$.

To prove \ref{it:jockusch-1}, one could formalize Specker's construction \cite{specker:ramsey} 
of a computable $2$-colouring of pairs with no r.e.~homogeneous set within $\ind \Sigma_1$.
Instead of that, we choose to formalize a weaker variant of the argument of \cite{jockusch:ramsey} 
proving \ref{it:jockusch-general} for $\ell = 1$.
We define a computable function $f \colon [\N]^2 \to 2$ in the following way. 
At stage $s$, we determine the values $f(n,s)$ for $n < s$.
To do this, we consider all $\Sigma_{1}$ formulas with codes $0,\ldots,\lfloor (s-1)/2\rfloor$. 
Given $e \le \lfloor (s-1)/2\rfloor$, 
if $e$ is the code of a $\Sigma_1$ formula $\exists v\, \delta(x,v)$ 
and there are at least $2e+2$ elements $x < s$ such that
$\eb{v}{s}\mathrm{Sat}_0(\gn{\delta},(x,v))$ holds, 
then choose the smallest two such elements $x_0,x_1$ for which $f(x_0,s), f(x_1,s)$ 
have not yet been defined, and let $f(x_i,s)=i$. Otherwise, do nothing.  
Once all the formulas with codes $0,\ldots, \lfloor (s-1)/2\rfloor$ have been dealt with,
complete stage $s$ by letting $f(x,s) = 0$ for all those $x<s$ for which
$f(x,s)$ was not defined earlier.

Now if the formula $\exists v\, \delta(x,v)$ with code $e$ defines 
an infinite homogeneous set for $f$, we can use $\Sigma_1$ induction
to conclude that there are at least $2e + 2$ elements $x$ 
such that $\exists v\, \delta(x,v)$ holds.
Consider the $2e + 2$ smallest such elements, say $x_0 <\ldots < x_{2e+1}$.
By another application of $\Sigma_1$ induction,
there is some $s > \max(2e, x_{2e+1})$
such that for $x \le x_{2e+1}$, 
if $\exists v\, \delta(x,v)$, then $\eb{v}{s} \delta(x,v)$.
Since there are infinitely many elements $x$ such that $\exists v\, \delta(x,v)$,
we can also assume that $\exists v\, \delta(s,v)$.
But the lower bounds on $s$ imply that at stage $s$ there will be some $i < j \le 2e+1$
such that $\exists v\, \delta(x_i,v), \exists v\, \delta(x_j,v)$,
and $f(x_i,s) \neq f(x_j,s)$. This is a contradiction, because all three elements $x,x',s$
satisfy a formula that defines a homogeneous set for $f$.
\end{proof}

\begin{lemma}\label{lem:rt-gives-jump}
Let $(M,\X) \models \RCA^*_0 + \RT^n_2$ where $n \ge 3$ and assume that $M \models \ind\Sigma_\ell$.
Then $0^{(\ell)} \in \X$. As a consequence, $\Delta_{\ell+1}$-$\mathrm{Def}(M) \subseteq \X$ and $M \models \bd \Sigma_{\ell +1}$.
\end{lemma}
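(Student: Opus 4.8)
The plan is to prove the statement by induction on $\ell$ (externally, in the metatheory, since $\ell \in \omega$), the key point being that $\RT^n_2$ for $n \ge 3$ is strong enough to produce jumps once we have enough induction to make $\RT^{n-1}_2$ available on a cut. First I would handle the base case $\ell = 0$: here $M \models \ind\Sigma_0$ trivially, $0^{(0)} = \emptyset \in \X$, and the claim $\Delta_1\text{-}\mathrm{Def}(M) \subseteq \X$ together with $M \models \bd\Sigma_1$ is immediate from $\RCA^*_0$ itself. For the inductive step, suppose the statement holds for $\ell - 1$; given $M \models \ind\Sigma_\ell$ we in particular have $M \models \ind\Sigma_{\ell-1}$, so by the inductive hypothesis $0^{(\ell-1)} \in \X$ and $M \models \bd\Sigma_\ell$. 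The goal is then to show $0^{(\ell)} \in \X$; by the remark in the preliminaries about $0^{(\ell)}$ and $(0^{(\ell-1)})'$ being mutually $\Delta_1$-definable under $\bd\Sigma_\ell$, and since $0^{(\ell-1)} \in \X$, it suffices to show $(0^{(\ell-1)})' \in \X$, i.e.\ the Turing jump of a set already in $\X$ belongs to $\X$.

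The heart of the argument is thus: if $(M,\X) \models \RCA^*_0 + \RT^n_2$ with $n \ge 3$, $M \models \ind\Sigma_\ell$ (so in particular $\bd\Sigma_1$ holds relative to any $A \in \X$), and $A \in \X$, then $A' \in \X$. This is a relativization of the classical fact that $\RT^3_2$ (indeed $\RT^2_2$ plus $\mathrm{COH}$, or more simply a direct Ramsey-for-triples coding) computes the jump. The standard construction: code into a $2$-colouring $f$ of triples the behaviour of $A$-computations, so that any infinite $f$-homogeneous set computes $A'$. Concretely, following the usual reverse-math argument that $\RT^3_2$ (or $\mathrm{RT}^2_2 + \mathrm{B}\Sigma^0_2$) proves $\ACA_0$-type consequences, one builds $f(x,y,z)$ depending on whether certain $A$-recursive approximations have settled by stage $z$ on inputs below $x$, arranged so that a homogeneous set $H$, read off in increasing order, lets one decide membership in $A'$. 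The subtlety in our setting is that $M$ need not satisfy $\ind\Sigma^0_1$, so $\RT^n_2$ only yields an infinite (= unbounded) homogeneous set $H$, possibly of cardinality a proper $\Sigma^0_1$-definable cut $I$; here is where we invoke Theorem~\ref{thm:rt-on-cut} together with Corollary~\ref{cor:chong-mourad-cut}. The cut $I$ on which the coding happens is closed under the relevant operations because $M \models \bd\Sigma_\ell \supseteq \bd\Sigma_1$, so the $A$-jump computed relative to $I$, pulled back via the enumeration $\{a_i : i \in I\}$, in fact defines all of $A'$ on $M$; and Corollary~\ref{cor:chong-mourad-cut} (applied with $n$ there equal to $1$, using $\bd\Sigma^0_1$) guarantees the resulting $\Sigma^0_1(H)$-definable — hence $\Delta^0_1(H,A)$-definable after the usual book-keeping — set lands in $\X$.

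From $0^{(\ell)} \in \X$ the two consequences follow by routine arguments available earlier in the development: every $\Delta_{\ell+1}$-definable subset of $M$ is $\Delta_1(0^{(\ell)})$-definable (by the standard arithmetical hierarchy characterization, valid under $\bd\Sigma_\ell$), hence lies in $\X$ by $\Delta^0_1$-comprehension of $\RCA^*_0$ relativized to $0^{(\ell)}$; and $M \models \bd\Sigma_{\ell+1}$ because $\bd\Sigma_{\ell+1}$ over $M$ is equivalent to $\bd\Sigma^0_1(0^{(\ell)})$, which holds in $(M,\X)$ since $0^{(\ell)} \in \X$ and $(M,\X) \models \RCA^*_0$. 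The main obstacle I anticipate is getting the Ramsey-for-triples coding of the jump to work cleanly in the absence of $\ind\Sigma^0_1$: one must be careful that the colouring of triples is genuinely $\Delta^0_1(A)$ (so that it is a legitimate colouring in $\X$), that the homogeneous-set-on-a-cut phenomenon does not lose information about $A'$ — which is exactly why $n \ge 3$ rather than $n = 2$ is needed, so that the "limit" quantifier in the jump is absorbed by one of the three coordinates ranging over an unbounded-in-$I$ homogeneous set — and that the passage through Theorem~\ref{thm:rt-on-cut} and Corollary~\ref{cor:chong-mourad-cut} is applied with the correct parameters. Everything else is bookkeeping with the arithmetical hierarchy under $\bd\Sigma_\ell$.
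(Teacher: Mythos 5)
Your overall architecture matches the paper's: external induction on $\ell$, with the inductive step given by the standard Jockusch-style colouring of triples coding the jump of $0^{(j)}$, followed by the routine observations that $0^{(\ell)} \in \X$ yields $\Delta_{\ell+1}$-$\mathrm{Def}(M) \subseteq \X$ and $M \models \bd\Sigma_{\ell+1}$. However, there is a genuine gap at the heart of the inductive step, and you have misdiagnosed where the difficulty lies. In this setting it is \emph{not} automatic that ``any infinite $f$-homogeneous set computes the jump'': an unbounded homogeneous $H \in \X$ could a priori be homogeneous for the colour meaning ``some sentence with code below $x$ looks false up to $y$ but acquires a witness by $z$'', and such an $H$ decodes nothing. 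The step you must supply --- and the only place where the hypothesis $M \models \ind\Sigma_\ell$ is actually used --- is ruling this colour out: $\ind\Sigma_\ell$ gives strong $\Sigma_{j+1}$ collection, so for each $x$ there is a $w$ bounding the witnesses of all true $\Sigma_{j+1}$ sentences with code at most $x$, whence the colouring takes the ``settled'' value whenever $z > y \ge w$ and no unbounded set can be homogeneous for the other colour. Your sketch never mentions this, and the closure properties you do invoke ($\bd\Sigma_1(A)$, closure of a cut ``under the relevant operations'') are too weak: strong $\Sigma_{j+1}$ collection is equivalent to $\ind\Sigma_{j+1}$, not to any amount of bounding.

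Relatedly, the detour through Theorem \ref{thm:rt-on-cut} and Corollary \ref{cor:chong-mourad-cut} is both unnecessary and not available in general. It addresses a non-problem: once the bad colour is excluded, decoding $(0^{(j)})'$ from $H$ uses only that $H$ is unbounded in $M$ (given a sentence code, pick $x \in H$ above it and $y \in H$ above $x$, and check for witnesses below $y$ using $0^{(j)}$; if the sentence were true but unwitnessed below $y$, unboundedness of $H$ would produce some $z \in H$ beyond an actual witness and violate homogeneity). So it is irrelevant whether $H$ has the order type of a proper cut. Conversely, Theorem \ref{thm:rt-on-cut} presupposes a proper $\Sigma^0_1$-definable cut, which need not exist under your hypotheses (for instance when $(M,\X) \models \RCA_0$), so the argument as you describe it does not even apply in all cases. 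Delete the cut detour, insert the strong-collection step, and your proof becomes the paper's.
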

\begin{proof}
Let $M \models \RCA^*_0 + \RT^n_2 + \ind \Sigma_\ell$.
We will prove by induction on $j \le \ell$ that $0^{(j)} \in \X$.
For $j = \ell$, this will immediately imply $\Delta_{\ell+1}$-$\mathrm{Def}(M) \subseteq \X$ and $M \models \bd \Sigma_{\ell+1}$ because $(M,\X)$ satisfies $\Delta^0_1$ comprehension and $\bd\Sigma^0_1$.

The base step of the induction holds by $\Delta^0_1$-comprehension in $(M,\X)$.
So, let $j < \ell$ and assume that $0^{(j)} \in \X$. 
We have to prove that $0^{(j+1)} \in \X$.

Consider the usual computable instance of $\RT^3_2$ whose solutions compute $0'$ and relativize
it to $0^{(j)}$:
\begin{equation*}
f(x,y,z) =
\begin{cases}
0 & \textrm{if there is a } \Sigma_{j+1} \textrm{ sentence } \exists v\, \pi(v) \textrm{ with code at most } x \\
  & \textrm{such that } \ab{v}{y}\mathrm{Sat}_j(\gn{\neg \pi},v) \land \eb{v}{z}\neg\mathrm{Sat}_j(\gn{\neg \pi},v), \\
1 & \textrm{otherwise.}
\end{cases}
\end{equation*}
The colouring $f$ is $\Delta_1(0^{(j)})$-definable, so $f \in \X$. By $\RT^n_2$,
there exists an infinite $H \in \X$ homogeneous for $f$. 
We claim that $H$ cannot be $0$-homogeneous for $f$.
To see this, note that by $\ind \Sigma_\ell$ 
we have strong $\Sigma_{j+1}$ collection, 
so for any given $x$ there is a bound $w$ 
such that for any $\Sigma_{i+1}$ sentence with code below $x$, 
if the sentence is true, then there is a witness for it below $w$. 
Thus, for any $z > y \ge w$, we must have
$f(x,y,z) = 1$, which implies that no infinite set can be $0$-homogeneous for $f$.

So, $H$ is $1$-homogeneous for $f$.
We can now compute $0^{(j+1)}$ with oracle access to $0^{(i)} \oplus H$ as follows:
given a $\Sigma_{j+1}$ sentence $\exists v\, \pi(v)$, 
find some $x \in H$ above the code for the sentence,
find $y \in H$ above $x$, and use $0^{(j)}$ to determine whether $\eb{v}{y}\pi(v)$ holds;
if is does not, then neither does $\exists v\, \pi(v)$. 
Both $0^{(j)}$ and $H$ are in $\X$, so $0^{(j+1)} \in \X$ as well.
\end{proof}

We are now ready to give an axiomatization of the first-order part of $\RCA^*_0 +\RT^n_2$ for $n\ge 3$.
Afterwards, we will study the relationship of this theory to the usual fragments of first-order arithmetic.

\begin{theorem}\label{thm:rt32-axioms}
Let $n \ge 3$ and let $\RR^n$ be the theory:
\begin{equation}\label{eqn:theory-rt32} \left\{  (\bd \Sigma_{\ell+1} \land \exp) \lor 
\bigvee_{j = 1}^{\ell} \Delta_j\textrm{-}\RT^n_2: 
\ell \in \omega \right \} .
\end{equation}
Then $\RR^n$ axiomatizes the first-order consequences of $\RCA^*_0 + \RT^n_2$.
\end{theorem}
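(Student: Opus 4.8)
The plan is to prove the two directions of the axiomatization separately: that $\RCA^*_0 + \RT^n_2$ proves every axiom of $\RR^n$, and conversely that every model of $\RR^n$ expands to a model of $\RCA^*_0 + \RT^n_2$ (equivalently, $\RR^n$ proves every first-order consequence of $\RCA^*_0 + \RT^n_2$).

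\textbf{From $\RCA^*_0 + \RT^n_2$ to $\RR^n$.} Fix $\ell \in \omega$ and work in a model $(M,\X) \models \RCA^*_0 + \RT^n_2$. I must show $M \models (\bd\Sigma_{\ell+1} \land \exp) \lor \bigvee_{j=1}^\ell \Delta_j\text{-}\RT^n_2$. Since $\exp$ is already a consequence of $\RCA^*_0$, it suffices to show that if $M \not\models \bd\Sigma_{\ell+1}$ then $M \models \Delta_j\text{-}\RT^n_2$ for some $j \le \ell$. By Lemma \ref{lem:rt-gives-jump} (contrapositive), $M \not\models \bd\Sigma_{\ell+1}$ forces $M \not\models \ind\Sigma_\ell$; let $j \le \ell$ be least such that $M \not\models \ind\Sigma_j$ (so $j \ge 1$ since $\RCA^*_0$ gives $\ind\Delta_0$, and $M \models \ind\Sigma_{j-1}$). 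Then $M$ has a proper $\Sigma_j$-definable cut $I$ — indeed, a proper $\Sigma_j$-definable cut arises directly from the failure of $\Sigma_j$-induction. I now want to show $(M, \Delta_j\text{-}\mathrm{Def}(M)) \models \RT^n_2$. The idea is to run the argument of Theorem \ref{thm:rt-on-cut} internally, but relative to the $(j{-}1)$st jump. Because $M \models \ind\Sigma_{j-1}$, Lemma \ref{lem:rt-gives-jump} gives $0^{(j-1)} \in \X$, so $\Delta_j$-sets are exactly the sets $\Delta^0_1$-definable from $0^{(j-1)}$; moreover $I$ can be taken $\Sigma^0_1(0^{(j-1)})$-definable, and $\Cod(M/I)$ relative to this setup consists of the $\Delta_j$-subsets of $I$. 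Applying Theorem \ref{thm:rt-on-cut} (with the second-order part generated by $0^{(j-1)}$, which satisfies $\RCA^*_0$) to the cut $I$, and using that $(M,\X) \models \RT^n_2$ implies $(I, \Cod(M/I)) \models \RT^n_2$, one transfers solutions back up to get $\Delta_j$-definable homogeneous sets for every $\Delta_j$-definable colouring of $[M]^n$. This gives $M \models \Delta_j\text{-}\RT^n_2$, as required.

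\textbf{From $\RR^n$ to the first-order consequences.} Let $M \models \RR^n$; I must build $\X$ with $(M,\X) \models \RCA^*_0 + \RT^n_2$. Two cases. If $M \models \ind\Sigma_\ell$ for every $\ell$, then $M \models \PA$; but then taking $\ell$ large, the first disjunct $\bd\Sigma_{\ell+1}$ is already available and I want the second disjunct to help — actually in this case I argue that $M \models \Delta_j\text{-}\RT^n_2$ for some $j$ is impossible by Lemma \ref{lem:jockusch}\ref{it:jockusch-1} applied at level $j$... so I should instead note this case cannot occur unless we arrange it differently; the correct split is: either $M$ has a least $j \ge 1$ with $M \not\models \ind\Sigma_j$, or $M \models \PA$. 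In the $\PA$ case, for each $\ell$ the axiom forces one of the finitely many $\Delta_j\text{-}\RT^n_2$ with $j \le \ell$ (since $\bd\Sigma_{\ell+1}$ holds we get nothing new) — but Lemma \ref{lem:jockusch} shows $\ind\Sigma_j$ refutes $\Delta_{j-1}\text{-}\RT^n_2$ and $\ind\Sigma_{j+1}$ refutes $\Delta_j\text{-}\RT^n_2$, so in a model of full $\PA$ no $\Delta_j\text{-}\RT^n_2$ holds; hence the $\PA$ case forces $\bd\Sigma_{\ell+1}$ to do the work, which it does, and one builds $\X = \Delta^0_1\text{-}\mathrm{Def}(M)$ extended suitably — but $\RT^n_2$ over a model of $\PA$ requires more. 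So actually in the $\PA$ case I take a countable $M$ and use the standard conservativity machinery (e.g. a Harrington-style or low-basis forcing argument over $\RCA_0 + \RT^n_2$, which is $\Pi^1_1$-conservative over $\PA$ by Jockusch plus McAloon), giving $(M,\X) \models \RCA_0 + \RT^n_2 \supseteq \RCA^*_0 + \RT^n_2$. In the other case, let $j \ge 1$ be least with $M \not\models \ind\Sigma_j$; then the axiom for $\ell = j-1$... no: I use the axiom for each $\ell \ge j$, but the cleanest is: since $M \not\models \ind\Sigma_j$ there is a proper $\Sigma_j$-cut $I$, and since $M \not\models \bd\Sigma_{\ell+1}$ fails to hold for, say, $\ell+1 = j$ is not forced — rather I directly invoke that $M \models \Delta_{j'}\text{-}\RT^n_2$ for some $j' \le $ something, coming from the axiom at a suitable $\ell$. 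Concretely: pick $\ell \ge j$ large enough that $M \not\models \bd\Sigma_{\ell+1}$ (possible since $M \not\models \ind\Sigma_j$ implies $M \not\models \bd\Sigma_{j+1}$... actually $\ind\Sigma_j$ failing gives $\bd\Sigma_j$ possibly still holding; but $\neg\ind\Sigma_j$ does imply $\neg\bd\Sigma_{j+1}$? No. Use instead: $M \not\models \ind\Sigma_j$, and if $M \models \bd\Sigma_{\ell+1}$ for all $\ell$ then $M \models \PA$, contradiction; so fix $\ell$ with $M \not\models \bd\Sigma_{\ell+1}$), so the axiom forces $M \models \Delta_{j'}\text{-}\RT^n_2$ for some $j' \le \ell$. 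Let $j'$ be minimal such. Then $(M, \Delta_{j'}\text{-}\mathrm{Def}(M)) \models \RT^n_2$; I claim this structure, or rather a natural $\RCA^*_0$-model built from it, witnesses $\RCA^*_0 + \RT^n_2$. The point is that $\Delta_{j'}\text{-}\mathrm{Def}(M)$ is closed under $\Delta^0_1$-comprehension and, since $j'$ is least with $\ind\Sigma_{j'}$ failing (note $j' \le j$ and by Lemma \ref{lem:jockusch} $\Delta_{j'}\text{-}\RT^n_2$ forces $\neg\ind\Sigma_{j'+1}$, and minimality of $j'$ plus the failure pattern pins down $j' = j$), the structure $(M, \Delta_{j'}\text{-}\mathrm{Def}(M))$ models $\RCA^*_0$ (bounded induction from $M$, $\exp$ from $M \models \RR^n \supseteq \exp$) and $\RT^n_2$.

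\textbf{Main obstacle.} The delicate point is the second direction: given only that $M$ satisfies the disjunctive scheme $\RR^n$, extracting a \emph{single} second-order universe $\X$ that simultaneously models $\RCA^*_0$ (so, in particular, $\bd\Sigma^0_1$, not merely $\bd\Sigma_1$ on the first-order part) and $\RT^n_2$. One must check carefully that $\Delta_{j'}\text{-}\mathrm{Def}(M)$ genuinely satisfies $\Sigma^0_1$-collection and $\Delta^0_1$-comprehension \emph{as a second-order structure} — this uses that $M \models \bd\Sigma_{j'}$ (which follows from minimality of $j'$ and $\ind\Sigma_{j'-1}$, via the standard fact that $\ind\Sigma_{j'-1} \vdash \bd\Sigma_{j'}$), together with the relativization of $\bd\Sigma^0_1$ to the oracle $0^{(j'-1)}$. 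Equally delicate is confirming that the minimal $j'$ with $M \models \Delta_{j'}\text{-}\RT^n_2$ is exactly the minimal $j$ with $M \not\models \ind\Sigma_j$: the inequality $j' \le j$ needs the forward direction argument (relativized Theorem \ref{thm:rt-on-cut}) run inside $M$ with the resources $M$ actually has, namely $\ind\Sigma_{j-1}$ hence $0^{(j-1)} \in \Delta_j\text{-}\mathrm{Def}(M)$; the inequality $j' \ge j$ needs Lemma \ref{lem:jockusch}\ref{it:jockusch-general}, which says $\ind\Sigma_{j'+1}$ refutes $\Delta_{j'}\text{-}\RT^n_2$, so $M \models \Delta_{j'}\text{-}\RT^n_2$ forces $M \not\models \ind\Sigma_{j'+1}$, whence (by minimality of $j$) $j \le j'+1$; combined with $j' \le j$ one must rule out $j' = j - 1$ using the sharper Lemma \ref{lem:jockusch}\ref{it:jockusch-1}/\ref{it:jockusch-general} at the right level. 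Getting this bookkeeping exactly right, and handling the edge case where $M \models \PA$ via the classical $\Pi^1_1$-conservativity of $\RCA_0 + \RT^n_2$ over $\PA$, is where the real work lies.
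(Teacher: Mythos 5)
Your overall strategy coincides with the paper's: the forward direction uses Lemma \ref{lem:rt-gives-jump} plus a double application of Theorem \ref{thm:rt-on-cut} at the least level where $\Sigma_j$-induction fails, the reverse direction uses Lemma \ref{lem:jockusch} to eliminate disjuncts, and the $\PA$ case is treated separately. The forward direction is essentially correct as you describe it. The reverse direction, however, contains one step that fails as justified: you propose to establish $j' \le j$ (that the least level at which $\Delta_{j'}$-$\RT^n_2$ holds does not exceed the least level $j$ at which $\Sigma_j$-induction fails) by ``running the forward direction argument (relativized Theorem \ref{thm:rt-on-cut}) inside $M$''. In this direction you have no second-order structure satisfying $\RT^n_2$ to start from --- that is precisely what you are trying to construct --- so Theorem \ref{thm:rt-on-cut} gives you nothing here, and the step is circular. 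The correct route, which the paper takes and which you partially sketch elsewhere in the same paragraph, is purely by elimination within the scheme: since $\ind\Sigma_{j''}$ holds for all $j'' < j$, Lemma \ref{lem:jockusch} refutes $\Delta_{j''}\textrm{-}\RT^n_2$ for each such $j''$, so the axiom of $\RR^n$ at level $\ell = j-1$ forces $\bd\Sigma_{j} \land \exp$ (exactly what is needed for $(M,\Delta_{j}\textrm{-}\mathrm{Def}(M))$ to model $\RCA^*_0$), and then the axiom at level $\ell = j$ --- whose first disjunct $\bd\Sigma_{j+1}$ fails because $\ind\Sigma_j$ fails --- forces $\Delta_j\textrm{-}\RT^n_2$ outright. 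No minimal $j'$ and no appeal to Theorem \ref{thm:rt-on-cut} are needed in this direction. Finally, the $\PA$ case does not require forcing, countability, or a conservation theorem: $(M,\mathrm{Def}(M)) \models \ACA_0$, which proves $\RT^n_2$ for fixed standard $n,k$.
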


\begin{proof}
Fix $n \ge 3$ and let $\RR^n$ be as in (\ref{eqn:theory-rt32}).

We first argue that for every $M \models \RR^n$ there is a family of sets $\X \subseteq \mathcal{P}(M)$ such that
$(M, \X) \models \RCA^*_0 + \RT^n_2$,
which will mean that $\RR^{n}\not\models\psi$ implies $\RCA^{*}_{0}+\RT^{n}_{2}\not\models \psi$
for each arithmetical sentence $\psi$.
So, let $M \models \RR^n$. If $M \models \mathrm{PA}$, then
$(M, \mathrm{Def}(M))$ is a model of $\ACA_0$ and, \emph{a fortiori}, of $\RCA^*_0 + \RT^n_2$.

Otherwise, let $\ell \in \omega$ be the smallest such that $M \models \neg \ind \Sigma_{\ell+1}$.
For each $j = 1,\ldots,\ell$, it follows from Lemma \ref{lem:jockusch} that there is a $\Delta_j$-definable 
$2$-colouring of $[M]^n$ with no $\Delta_j$-definable homogeneous set, so $\RR^n$ implies that
$\bd \Sigma_{\ell+1} + \exp$ must hold in $M$. 
Moreover, since $\bd \Sigma_{\ell+2}$ fails, it must be the case that
$M \models \Delta_{\ell+1}$-$\RT^n_2$.
Thus $(M,\Delta_{\ell+1}$-$\mathrm{Def}(M)) \models \RCA^*_0 + \RT^n_2$.

In the other direction, we assume that $(M,\X) \models \RCA^*_0 + \RT^n_2$
and prove that $M \models \RR^n$.
This is clear if $M\models\PA$.
Otherwise, let $\ell$ be such that $M \models \neg \bd \Sigma_{\ell+1}$. 
Let $j \le \ell$ be the largest such that $M \models \ind \Sigma_j$.
By Lemma \ref{lem:rt-gives-jump}, $M \models \bd \Sigma_{j+1}$,
so in particular $j < \ell$. Moreover, $\Delta_{j+1}$-$\mathrm{Def}(M) \subseteq \X$. 
We now argue that $(M, \Delta_{j+1}$-$\mathrm{Def}(M)) \models \RT^n_2$, 
which will complete the argument.

Let $I$ be a $\Sigma_{j+1}$-definable proper cut in $M$. 
The cut $I$ is $\Sigma^0_1$-definable in 
$(M, \Delta_{j+1}$-$\mathrm{Def}(M))$ and thus also in $(M,\X)$.
Moreover, both of these structures satisfy $\RCA^*_0$. 
Therefore, Theorem \ref{thm:rt-on-cut} and the fact
that $(M,\X) \models \RCA^*_0 + \RT^n_2$
let us conclude that $(M, \Delta_{j+1}$-$\mathrm{Def}(M)) \models \RT^n_2$
as well.
\end{proof}

\begin{definition}
The theory $\IB$ is axiomatized by $\bd\Sigma_1$ and the set of sentences
\[\{\ind\Sigma_\ell \Rightarrow \bd\Sigma_{\ell+1}: \ell \ge 1\}.\]
\end{definition}

Kaye \cite{kaye:constructing-kappa-like} showed that $\IB + \exp$ implies the theory of all $\kappa$-like models of arithmetic
(for $\kappa$ possibly singular). 
It is now known (see \cite[Section 3.3]{haken:thesis}, \cite[Section 6]{bcwwy:php-konig})
that  $\IB + \exp$ is actually strictly stronger than the theory of all $\kappa$-like models.

\begin{theorem}\label{thm:rt32-fragments-pa} Let $n \ge 3$. Then:
\begin{enumerate}[(a)]
\item\label{it:rt32-fo} the first-order consequences of $\RCA^*_0 + \RT^n_2$ are strictly in between $\IB + \exp$
and $\PA$; as a result, they are not finitely axiomatizable.
\item\label{it:rt32-fo-levels} the $\Pi_3$ consequences of $\RCA^*_0 + \RT^n_2$ coincide with $\bd\Sigma_1 + \exp$;
for $\ell \ge 1$, the $\Pi_{\ell +3}$ consequences are strictly in between \[\bd\Sigma_1 +{\exp} + \bigwedge_{1 \le j \le \ell}(\ind\Sigma_j \Rightarrow \bd\Sigma_{j+1})\] and $\bd\Sigma_{\ell+1}$.
\end{enumerate}
\end{theorem}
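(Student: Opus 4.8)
The proof rests on the axiomatization $\RR^n$ from Theorem \ref{thm:rt32-axioms}, so everything should be phrased in terms of models of $\RR^n$. For part \ref{it:rt32-fo}, the inclusion in $\PA$ is clear since $\ACA_0$ proves $\RT^n_2$ and has first-order part $\PA$. For strictness below $\PA$, I would exhibit a nonstandard model of $\RR^n$ that is not a model of $\PA$: take any model $M \models \bd\Sigma_1 + \exp$ together with a proper $\Sigma^0_1$-cut on which Ramsey holds computably — concretely, start from a countable model of $\IB + \exp$ satisfying $\neg\ind\Sigma_1$, build a $\Sigma_1$-cut $I$ so that $(I, \mathrm{SSy})$ carries enough combinatorial structure to satisfy $\RT^n_2$ (using that $\RT^n_2$ is $\Pi^1_1$-conservative over $\mathrm{ACA}_0$ and standard cut-construction techniques), and glue via Theorem \ref{thm:rt-on-cut}; then $M \models \RR^n$ but $M \not\models \ind\Sigma_1$. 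To show $\IB + \exp$ is a strict \emph{lower} bound I first verify $\RR^n \vdash \IB + \exp$: every disjunct $(\bd\Sigma_{\ell+1} \land \exp) \lor \bigvee_{j\le\ell}\Delta_j\text{-}\RT^n_2$ together with Lemma \ref{lem:jockusch}\ref{it:jockusch-1}, \ref{it:jockusch-general} (which says $\ind\Sigma_j$ kills $\Delta_j$-$\RT^n_2$) forces $\ind\Sigma_\ell \Rightarrow \bd\Sigma_{\ell+1}$; strictness follows because $\RR^n$ is not finitely axiomatizable while $\IB + \exp$, though also infinitely axiomatized, is separated from it by the non-finite-axiomatizability argument and by exhibiting a model of $\IB+\exp$ failing some $\Delta_j\text{-}\RT^n_2$-related consequence. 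The non-finite-axiomatizability itself follows from part \ref{it:rt32-fo-levels}: a finite subtheory would be $\Pi_{\ell+3}$ for some fixed $\ell$ and hence, by strictness at each level, could not prove all of $\RR^n$.

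\textbf{Part \ref{it:rt32-fo-levels}, the $\Pi_3$ case.} One inclusion is immediate: $\RCA^*_0$ has first-order part $\bd\Sigma_1 + \exp$, so every $\Pi_3$ consequence of $\RCA^*_0 + \RT^n_2$ that is provable already in $\RCA^*_0$ is in $\bd\Sigma_1 + \exp$. For the converse I need that $\RCA^*_0 + \RT^n_2$ proves no \emph{new} $\Pi_3$ sentences, equivalently that every model $M_0 \models \bd\Sigma_1 + \exp$ extends to a model of $\RR^n$ with the same $\Pi_3$-theory, or better, that $\RR^n$ is $\Pi_3$-conservative over $\bd\Sigma_1 + \exp$. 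This is where I invoke Yokoyama's theorem \cite{yokoyama:rt-rca*0} that $\RCA^*_0 + \RT^n_k$ is $\Pi_2$-conservative over $\ind\Delta_0 + \exp$ — but we need $\Pi_3$, so the argument must be pushed one level. The idea: given a $\Pi_3$ sentence $\forall x\, \exists y\, \forall z\, \varphi$ not provable in $\bd\Sigma_1 + \exp$, take a countable model $M_0$ of $\bd\Sigma_1 + \exp + \neg(\forall x\exists y\forall z\,\varphi)$, pick a witness $a$ with $\forall y\,\exists z\,\neg\varphi(a,y,z)$, pass to a suitable $\Sigma_1$-cut $I \ni a$ of a larger model $M$ with $(I,\mathrm{Cod}(M/I)) \models \mathrm{ACA}_0$ (so $\RT^n_2$ holds on the cut), arrange that the relevant $\Pi_3$-fact about $a$ is preserved; Theorem \ref{thm:rt-on-cut} then gives $(M, \X) \models \RCA^*_0 + \RT^n_2$ while $M \models \neg(\forall x\exists y\forall z\,\varphi)$.

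\textbf{Part \ref{it:rt32-fo-levels}, the higher levels.} For $\ell \ge 1$, the lower bound $\bd\Sigma_1 + \exp + \bigwedge_{1\le j\le\ell}(\ind\Sigma_j \Rightarrow \bd\Sigma_{j+1})$ is a $\Pi_{\ell+3}$-consequence: each $\ind\Sigma_j \Rightarrow \bd\Sigma_{j+1}$ is $\Pi_{j+3} \subseteq \Pi_{\ell+3}$, and it is provable in $\RR^n$ by the argument of the first paragraph. The upper bound $\bd\Sigma_{\ell+1}$ holds because, in any $(M,\X)\models\RCA^*_0+\RT^n_2$, Lemma \ref{lem:rt-gives-jump} gives $M\models\bd\Sigma_{\ell+1}$ whenever $M\models\ind\Sigma_\ell$; and if $M\not\models\ind\Sigma_\ell$ then trivially $M\models\bd\Sigma_\ell$ and one checks directly that the relevant $\Pi_{\ell+3}$-consequences are already available — more carefully, one shows $\RR^n \vdash \bd\Sigma_{\ell+1}$ is \emph{false} so the upper bound is genuine strictness: there is a model of $\RR^n$ (the computably-true Ramsey model over a cut of a model of $\neg\ind\Sigma_\ell$) violating $\bd\Sigma_{\ell+1}$, which simultaneously proves $\RR^n \not\supseteq \bd\Sigma_{\ell+1}$ and, with the lower bound, that the $\Pi_{\ell+3}$-fragment is \emph{strictly} below $\bd\Sigma_{\ell+1}$. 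Strictness above the lower bound $\IB$-type theory is the subtle direction: I must produce, for each $\ell$, a model of $\bd\Sigma_1+\exp+\bigwedge_{j\le\ell}(\ind\Sigma_j\Rightarrow\bd\Sigma_{j+1})$ that fails some $\Pi_{\ell+3}$-consequence of $\RR^n$; the natural candidate is a consequence capturing ``$\Delta_{\ell+1}$-$\RT^n_2$ holds or $\bd\Sigma_{\ell+2}$'', realized in a model with $\ind\Sigma_{\ell+1}$ but $\neg\bd\Sigma_{\ell+2}$ where the needed $\Delta_{\ell+1}$-homogeneous set is absent.

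\textbf{Main obstacle.} The hard part is the strictness of the lower bounds — both globally ($\IB+\exp \subsetneq$ first-order part $\subsetneq \PA$, the left inclusion) and level-by-level. For these one needs genuine model constructions: models of the weaker $\IB$-style theory that fail to be models of $\RR^n$, witnessed by carefully chosen sentences involving the $\Delta_j$-$\RT^n_2$ principles. The cleanest route is probably to observe that $\RR^n$, via Lemma \ref{lem:jockusch}, ``forces Ramsey to fail below the first failure of induction'', a property that $\IB+\exp$ does not encode, and to realize the difference in a model where $\ind\Sigma_\ell$ holds, $\bd\Sigma_{\ell+1}$ holds, but $\ind\Sigma_{\ell+1}$ fails and no $\Delta_{\ell+1}$-definable homogeneous set exists for the canonical $\Delta_{\ell+1}$-colouring — so that $\Delta_{\ell+1}$-$\RT^n_2$ fails there, contradicting a disjunct of $\RR^n$ at level $\ell+1$, yet all the $\IB$-axioms survive.
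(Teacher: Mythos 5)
Your overall skeleton (reduce everything to the axiomatization $\RR^n$, use Lemmas \ref{lem:jockusch} and \ref{lem:rt-gives-jump} for the lower bounds, and cut constructions plus Theorem \ref{thm:rt-on-cut} for the upper bounds) matches the paper, and your non-finite-axiomatizability argument is essentially the paper's. But there are two genuine gaps.

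First, your argument for strictness of the lower bound at level $\Pi_{\ell+3}$ cannot work as stated. You propose to witness the separation with the sentence ``$\Delta_{\ell+1}$-$\RT^n_2 \lor \bd\Sigma_{\ell+2}$'' realized in a model with $\ind\Sigma_{\ell+1} \land \neg\bd\Sigma_{\ell+2}$. That sentence is $\Pi_{\ell+4}$, not $\Pi_{\ell+3}$ (note that $\Delta_j$-$\RT^n_2$ is $\Pi_{j+3}$), so it says nothing about the $\Pi_{\ell+3}$ fragment. Worse, your proposed model satisfies $\bd\Sigma_{\ell+1}$, and by the upper-bound half of the very statement you are proving it therefore satisfies \emph{every} $\Pi_{\ell+3}$ consequence of $\RR^n$ — no model of $\ind\Sigma_{\ell+1}$ can witness this separation. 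The correct witness is the $\Pi_{\ell+3}$ axiom $(\bd\Sigma_{\ell+1}\land\exp)\lor\bigvee_{j=1}^{\ell}\Delta_j\textrm{-}\RT^n_2$ itself, refuted in a model $M\models\bd\Sigma_\ell+\exp$ in which $\omega$ is $\Sigma_\ell$-definable and $(\omega,\mathrm{SSy}(M))\not\models\RT^n_2$: there $\bd\Sigma_{\ell+1}$ fails, $\Delta_j$-$\RT^n_2$ fails for $j<\ell$ by Lemma \ref{lem:jockusch}, and $\Delta_\ell$-$\RT^n_2$ fails by Theorem \ref{thm:rt-on-cut} applied to the cut $\omega$; yet $M\models\IB+\exp$ because $\ind\Sigma_\ell$ fails. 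This same model is what separates $\IB+\exp$ from $\RR^n$ in part \ref{it:rt32-fo}, where your proposal is also vague.

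Second, for $\ell\ge 1$ you assert the upper bound (every $\Pi_{\ell+3}$ consequence of $\RR^n$ follows from $\bd\Sigma_{\ell+1}$) but the justification you give — that Lemma \ref{lem:rt-gives-jump} yields $\bd\Sigma_{\ell+1}$ from $\ind\Sigma_\ell$ inside models of $\RR^n$ — argues the \emph{lower} bound and is irrelevant here; showing that $\RR^n\not\vdash\bd\Sigma_{\ell+1}$ gives strictness but not containment. What is needed is a conservation argument: for every $\Sigma_{\ell+3}$ sentence $\psi$ consistent with $\bd\Sigma_{\ell+1}$, a model of $\RR^n\land\psi$. You sketch this only for $\ell=0$. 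For $\ell\ge 1$ one takes $K\models\bd\Sigma_{\ell+1}\land\psi$ with $(\omega,\mathrm{SSy}(K))\models\RT^n_2$ and passes to the cut generated by iterating a $\Delta_{\ell+1}$-definable function that simultaneously bounds the witnesses for $\psi$ \emph{and} the witnesses of true $\Sigma_\ell$ sentences; the latter clause is essential to keep $\bd\Sigma_{\ell+1}$ (and $\psi$) true in the cut while making $\omega$ become $\Sigma_{\ell+1}$-definable, after which Theorem \ref{thm:rt-on-cut} applies. Without this elementarity device the cut need not model $\bd\Sigma_{\ell+1}$, and your higher-level upper bound is unproved.
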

\begin{proof}
We first prove \ref{it:rt32-fo-levels}. As in Theorem \ref{thm:rt32-axioms},
we let $\RR^n$ stand for the first-order consequences of $\RCA^*_0 + \RT^n_2$.

It follows immediately from the definition of $\RCA^*_0$ and Lemma \ref{lem:rt-gives-jump}
that the $\Pi_{\ell +3}$ consequences of $\RR^n$ include
$\bd\Sigma_1 + {\exp}$ and $\ind\Sigma_\ell \Rightarrow \bd\Sigma_{\ell+1}$ for each $j \le \ell$.
For $\ell \ge 1$, the inclusion is strict, because the statement
\[(\bd \Sigma_{\ell+1} \land \exp) \lor \bigvee_{j = 1}^{\ell} \Delta_j\textrm{-}\RT^n_2\]
is $\Pi_{\ell+3}$ but not provable 
in $\bd\Sigma_1 +{\exp} + \bigwedge_{1 \le j \le \ell}(\ind\Sigma_j \Rightarrow \bd\Sigma_{j+1})$.
To see the unprovability, consider a model $M \models \bd\Sigma_\ell + \exp$ such that
$\omega$ is $\Sigma_{\ell}$-definable in $M$ and $(\omega,\mathrm{SSy}(M)) \not \models \RT^n_2$.
Then, clearly, $M \models \ind\Sigma_j \Rightarrow \bd\Sigma_{j+1}$ for each $j \le \ell$; 
in fact, $M$ is a model of $\IB$.
However, Lemma \ref{lem:jockusch} 
implies that $(M,\Delta_{j}$-$\mathrm{Def}(M)) \not \models \RT^n_2$
for each $1 \le j \le \ell-1$.
On the other hand, $(M,\Delta_{\ell}$-$\mathrm{Def}(M))$ is a model of
$\RCA^*_0$ in which $\omega$ is $\Sigma^0_1$-definable,
so by Theorem \ref{thm:rt-on-cut} and the choice of $\mathrm{SSy}(M)$
it does not satisfy $\RT^n_2$ either.

Using a model $M$ chosen similarly but with $(\omega,\mathrm{SSy}(M)) \models \RT^n_2$,
we get $(M,\Delta_{\ell}$-$\mathrm{Def}(M)) \models \RT^n_2 + \neg \bd \Sigma_{\ell +1}$.
Thus, $\RR^n$ does not prove $\bd \Sigma_{\ell +1}$ for $\ell \ge 1$.

To see that all $\Pi_{\ell + 3}$ consequences of $\RR^n$ follow
from $\bd\Sigma_{\ell+1}$ for $\ell \ge 1$
let the $\Sigma_{\ell + 3}$ formula
$\psi := \exists x\, \forall y\, \exists z\, \pi(x,y,z)$ be consistent with $\bd\Sigma_{\ell+1}$,
let $K \models \bd\Sigma_{\ell+1} \land \psi$
be such that $(\omega,\mathrm{SSy}(K)) \models \RT^n_2$, 
and let $a \in K$ be a witness for the initial existential quantifier in $\psi$. 
By $\bd\Sigma_{\ell+1}$, the function
\begin{align*}
f(y) & =\textrm{least } w > y \textrm{ such that } \ab{y'}{y}\eb{z}{w}\pi(a,y',z) \\
     & ~~~~\textrm{and ``true }\Sigma_\ell \textrm{ sentences with codes } {\le y} \textrm{ are witnessed } {\le w}\textrm{''}
\end{align*}
is total and $\Delta_{\ell+1}$-definable in $K$.
Let $M$ be the cut $\sup_K(\{f^{m}(a): m \in \omega\})$.
Then $M \models \bd\Sigma_{\ell +1}\land \psi$ 
and $\omega$ is $\Sigma_{\ell +1}$-definable in $M$.
Since $(\omega,\mathrm{SSy}(M)) \models \RT^n_2$,
we get $(M,\Delta_{\ell}$-$\mathrm{Def}(M)) \models \RT^n_2$ by Theorem \ref{thm:rt-on-cut},
so $M \models \RR^n \land \psi$.

The proof that the $\Pi_3$ consequences of $\RR^n$ follow
from $\bd\Sigma_{1} + \exp$ is very similar, except that the function
$f$ is now defined by
\begin{align*}
f(y) & =\textrm{least } w > 2^y \textrm{ such that } \ab{y'}{y}\eb{z}{w}\pi(a,y',z),
\end{align*}
where $\pi$ is now a $\Delta_0$ formula. The difference is due to the fact that
for $\ell = 0$ we no longer have to care about elementarity between the cut $M$ and the model $K$
to ensure that $M \models \bd\Sigma_{\ell +1}\land \psi$, but we need to guarantee that $M \models \exp$.

We have thus proved \ref{it:rt32-fo-levels}. Regarding \ref{it:rt32-fo},
note that the containments \[\IB + \exp \subseteq \RR^n \subsetneq \PA\]
follow directly from the statement of \ref{it:rt32-fo-levels}, 
and in the proof of \ref{it:rt32-fo-levels} we constructed a model
of $\IB + \exp$ not satisfying $\RR^n$. 
Finally, observe that $\IB$ is not contained in any $\ind\Sigma_\ell$, 
so any subtheory of $\PA$ extending $\IB$
cannot be finitely axiomatizable.
\end{proof}

Note that the proof of Theorem \ref{thm:rt32-fragments-pa} immediately gives the following statement, 
which says essentially that Lemma \ref{lem:jockusch} is optimal with respect to the amount of induction used
to prove the existence of colourings without simple homogeneous sets.

\begin{corollary}
For each $\ell \ge 1, n\ge 2$, the theory $\bd\Sigma_{\ell} + \exp + \Delta_{\ell}$-$\RT^n_2$ is consistent.
\end{corollary}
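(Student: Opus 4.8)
The plan is to observe that the witnessing model has, in essence, already been constructed in the proof of Theorem~\ref{thm:rt32-fragments-pa}. Fix $\ell \ge 1$ and $n \ge 2$. Recall that in proving part~\ref{it:rt32-fo-levels} of Theorem~\ref{thm:rt32-fragments-pa} we used, for each $\ell$, a model $M$ with three properties: $M \models \bd\Sigma_\ell + \exp$; $\omega$ is $\Sigma_\ell$-definable in $M$; and $\mathrm{SSy}(M)$ is chosen so that $(\omega, \mathrm{SSy}(M)) \models \RT^n_2$. Such an $M$ exists because one can prescribe $\mathrm{SSy}(M)$ to be any countable Scott set with $(\omega, \mathrm{SSy}(M)) \models \RT^n_2$ --- for instance the family of arithmetical sets, which is a Scott set closed under the Turing jump, so that $(\omega, \mathrm{SSy}(M)) \models \ACA_0$ and hence $\models \RT^n_k$ for all standard $n,k$ --- while still forcing $\bd\Sigma_\ell + \exp$ and making $\omega$ $\Sigma_\ell$-definable. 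I claim this single $M$ already witnesses the consistency of $\bd\Sigma_\ell + \exp + \Delta_\ell\textrm{-}\RT^n_2$.

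The key step is to verify that $M \models \Delta_\ell\textrm{-}\RT^n_2$, i.e.\ that $(M, \Delta_\ell\textrm{-}\mathrm{Def}(M)) \models \RT^n_2$. Since $M \models \bd\Sigma_\ell + \exp$, the structure $(M, \Delta_\ell\textrm{-}\mathrm{Def}(M))$ is a model of $\RCA^*_0$. Next, $\omega$ is a proper cut of $M$ that is $\Sigma^0_1$-definable in $(M, \Delta_\ell\textrm{-}\mathrm{Def}(M))$: being $\Sigma_\ell$-definable in $M$, it is $\Sigma_1(0^{(\ell-1)})$-definable there by the form of Post's theorem provable over $\bd\Sigma_\ell$, and the set $0^{(\ell-1)}$, being $\Sigma_{\ell-1}$- and hence $\Delta_\ell$-definable, belongs to $\Delta_\ell\textrm{-}\mathrm{Def}(M)$; for $\ell = 1$ no parameter is needed, as $\Sigma_1$ formulas are already $\Sigma^0_1$. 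Finally, $\Cod(M/\omega) = \mathrm{SSy}(M)$. Applying Theorem~\ref{thm:rt-on-cut} to the $\Sigma^0_1$-definable proper cut $\omega$ in $(M, \Delta_\ell\textrm{-}\mathrm{Def}(M))$, we get that $(M, \Delta_\ell\textrm{-}\mathrm{Def}(M)) \models \RT^n_2$ if and only if $(\omega, \mathrm{SSy}(M)) \models \RT^n_2$, and the latter holds by the choice of $M$. Hence $M \models \Delta_\ell\textrm{-}\RT^n_2$. Since also $M \models \bd\Sigma_\ell + \exp$ --- and, as a bonus, $M \models \neg\bd\Sigma_{\ell+1}$, because a $\Sigma_\ell$-definable proper cut falsifies $\ind\Sigma_\ell$ and hence $\bd\Sigma_{\ell+1}$ --- this proves the corollary.

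I do not expect a serious obstacle: the proof is a re-reading of the proof of Theorem~\ref{thm:rt32-fragments-pa}. The two points deserving a second look are, first, the passage from $\Sigma_\ell$-definability of $\omega$ in $M$ to $\Sigma^0_1$-definability in $(M, \Delta_\ell\textrm{-}\mathrm{Def}(M))$, which for $\ell \ge 2$ reduces to the availability of $0^{(\ell-1)}$ as a set parameter --- and hence to the relativized form of Post's theorem recalled in Section~1, namely that $\bd\Sigma_\ell$ proves $0^{(\ell)}$ and $(0^{(\ell-1)})'$ mutually $\Delta_1$-definable --- and is trivial for $\ell = 1$; and, second, the fact that the hypothesis $n \ge 3$ of Theorem~\ref{thm:rt32-fragments-pa} is not needed here, since it enters that proof only via Lemma~\ref{lem:rt-gives-jump}, which plays no part in the argument above, whereas Theorem~\ref{thm:rt-on-cut} holds for all standard $n$ and $(\omega, \mathrm{SSy}(M)) \models \RT^n_2$ was arranged for all standard $n$. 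Hence the corollary holds for every $n \ge 2$.
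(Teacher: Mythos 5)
Your proposal is correct and is essentially the paper's own argument: the paper derives this corollary directly from the model used in the proof of Theorem~\ref{thm:rt32-fragments-pa}, namely an $M \models \bd\Sigma_\ell + \exp$ with $\Sigma_\ell$-definable $\omega$ and $(\omega,\mathrm{SSy}(M)) \models \RT^n_2$, to which Theorem~\ref{thm:rt-on-cut} is applied in $(M,\Delta_\ell\textrm{-}\mathrm{Def}(M))$. Your added details --- the Scott-set choice of $\mathrm{SSy}(M)$, the passage from $\Sigma_\ell$-definability of $\omega$ to $\Sigma^0_1$-definability via the parameter $0^{(\ell-1)}$, and the observation that $n \ge 3$ is not needed --- are all correct elaborations of what the paper leaves implicit.
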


\begin{remark}
As mentioned in Section \ref{sec:prelim}, results such as Theorem~\ref{thm:rt32-fragments-pa}
can be converted from purely arithmetical to $\Pi^{1}_{1}$ form by relativizing to second-order parameters.
In Theorem~\ref{thm:rt32-fragments-pa}\ref{it:rt32-fo}, the appropriate relativization of the scheme $\IB$ 
takes the form $\forall X\,(\ind \Sigma_k(X) \Rightarrow \bd \Sigma_{k+1}(X))$ for each $k$.
In Section \ref{sec:rt22}, we will also consider a weaker relativization of $\IB$: see the remark after Corollary~\ref{cor:rt22-cs2}.
\end{remark}

\begin{question}
Does $\RCA^*_0 + \RT^3_2$ imply $\RT^4_2$? 
More generally, does $\RCA^*_0 + \RT^n_2$ imply $\RT^{n+1}_2$ for some/all $n \ge 3$? 
\end{question}

\section{Ramsey for pairs}\label{sec:rt22}

We turn to the case of Ramsey's Theorem for pairs. Here, we are not able to give
a complete axiomatization analogous to that of Theorem \ref{thm:rt32-axioms}. 
Loosely speaking, our understanding of the strength of $\RCA^*_0 + \RT^2_2$ strongly depends on the amount
of induction satisfied by the underlying first-order model.

\begin{theorem}\label{thm:rt22-axioms} 
Let $\RR^2$ stand for the first-order consequences of $\RCA^*_0 + \RT^2_2$.
Then:
\begin{enumerate}[(a)]
\item\label{it:rt22-neg-is1} $\RR^2 \land \neg \ind \Sigma_1$ is axiomatized by 
$\bd\Sigma_1+\exp + \Delta_1$-$\RT^2_2$.
\item \label{it:rt22-is2} $\ind \Sigma_2$ implies $\RR^2$.
\item\label{it:rt22-bs2} Over $\bd\Sigma_2$, $\RR^2$ is implied by, and consistent with, 
both the first-order consequences of $\RCA_0 +\RT^2_2$ and the statement $\Delta_2$-$\RT^2_2$.
\item\label{it:rt22-is1} $\RR^2$ implies every first-order sentence $\psi$ such that both
$\bd \Sigma_2 \vdash \psi$ and $\RCA^*_0 + \neg \ind\Sigma^0_1 \vdash \psi$.
\end{enumerate}
\end{theorem}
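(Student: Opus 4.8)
The plan is to handle the four parts largely separately, using the cut machinery of Section~\ref{sec:cuts} together with a couple of standard facts about $\RT^2_2$. Throughout I use that a first-order structure $M$ satisfies $\RR^2$ exactly when $\mathrm{Th}(M)+\RCA^*_0+\RT^2_2$ is consistent, so from $M\models\RR^2$ I may pass to some $N\equiv M$ carrying a family $\mathcal{Y}$ with $(N,\mathcal{Y})\models\RCA^*_0+\RT^2_2$; as the auxiliary statements in play ($\psi$, $\Delta_1\text{-}\RT^2_2$, the failure of a fixed induction instance) are first-order, their truth in $N$ transfers back to $M$. I also use Hirst's theorem that $\RCA_0+\RT^2_2\vdash\bd\Sigma^0_2$, hence $\vdash\bd\Sigma_2$. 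For part~\ref{it:rt22-is1}: given $M\models\RR^2$, take $N$ and $\mathcal{Y}$ as above. If $(N,\mathcal{Y})\models\ind\Sigma^0_1$ then $(N,\mathcal{Y})\models\RCA_0+\RT^2_2$, so $N\models\bd\Sigma_2$ and hence $N\models\psi$ by the first hypothesis on $\psi$; if instead $(N,\mathcal{Y})\models\neg\ind\Sigma^0_1$ then $(N,\mathcal{Y})\models\RCA^*_0+\neg\ind\Sigma^0_1$, so $N\models\psi$ by the second hypothesis. Either way $M\models\psi$.

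Part~\ref{it:rt22-bs2} is mostly bookkeeping: $\RR^2$ follows from the first-order consequences of $\RCA_0+\RT^2_2$ simply because $\RCA_0\vdash\RCA^*_0$, and it follows from $\bd\Sigma_2+\exp+\Delta_2\text{-}\RT^2_2$ because under these hypotheses $(M,\Delta_2\text{-}\mathrm{Def}(M))\models\RCA^*_0$ (the relativisation of $\bd\Sigma^0_1$ to $\Delta_2$-sets is just $\bd\Sigma_2$), and $\Delta_2\text{-}\RT^2_2$ asserts precisely that this structure satisfies $\RT^2_2$; the consistency claims are immediate from the fact that $\RCA_0+\RT^2_2$ is consistent with first-order part containing $\bd\Sigma_2$, together with the case $\ell=n=2$ of the corollary following Theorem~\ref{thm:rt32-fragments-pa}. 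For part~\ref{it:rt22-neg-is1}, one direction is as in Theorem~\ref{thm:rt32-axioms}: from $\bd\Sigma_1+\exp+\Delta_1\text{-}\RT^2_2$ we get $(M,\Delta_1\text{-}\mathrm{Def}(M))\models\RCA^*_0+\RT^2_2$ hence $M\models\RR^2$, while $\ind\Sigma_1$ is excluded since by Lemma~\ref{lem:jockusch}\ref{it:jockusch-1} it would yield a $\Delta_1$-definable colouring with no $\Delta_1$-definable homogeneous set. For the converse, suppose $M\models\RR^2+\neg\ind\Sigma_1$; then $\bd\Sigma_1+\exp$ holds (it lies in $\RR^2$) and $M\models\neg\chi_e$ for some fixed instance $\chi_e$ of lightface $\ind\Sigma_1$, so it suffices to show $\RCA^*_0+\RT^2_2\vdash(\neg\chi_e\to\Delta_1\text{-}\RT^2_2)$. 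So let $(N,\mathcal{Y})\models\RCA^*_0+\RT^2_2+\neg\chi_e$. The failure of $\chi_e$, together with $\bd\Sigma_1$, produces a proper $\Sigma_1$-definable cut $I\subseteq N$ with only first-order parameters (take $I=\{x:\forall x'\le x\,\varphi(x')\}$ for the $\Sigma_1$ formula $\varphi$ witnessing the failure, which is again $\Sigma_1$ by $\bd\Sigma_1$). Then $I$ is $\Sigma^0_1$-definable in each of $(N,\mathcal{Y})$ and $(N,\Delta_1\text{-}\mathrm{Def}(N))$ — both models of $\RCA^*_0$ — and $\Cod(N/I)$ depends only on $N$ and $I$; so Theorem~\ref{thm:rt-on-cut} applied to $(N,\mathcal{Y})$ gives $(I,\Cod(N/I))\models\RT^2_2$, and applied in the reverse direction to $(N,\Delta_1\text{-}\mathrm{Def}(N))$ gives $(N,\Delta_1\text{-}\mathrm{Def}(N))\models\RT^2_2$, i.e.\ $N\models\Delta_1\text{-}\RT^2_2$.

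Part~\ref{it:rt22-is2} is the substantial one. Given $M\models\ind\Sigma_2$, pick a countable $M_0\equiv M$; I claim $(M_0,\Delta_1\text{-}\mathrm{Def}(M_0))\models\RCA_0+\ind\Sigma^0_2$, because (using $\bd\Sigma_1$ to keep complexity at the expected level) the relativisation of $\ind\Sigma^0_1$ to $\Delta_1$-sets is $\ind\Sigma_1$ and that of $\ind\Sigma^0_2$ is $\ind\Sigma_2$, both holding in $M_0$. The key step is then to enlarge the second-order part to a family $\mathcal{X}'$ with $(M_0,\mathcal{X}')\models\RCA_0+\RT^2_2$; this should come from iterating, over all colourings, the Cholak--Jockusch--Slaman construction of homogeneous sets preserving $\ind\Sigma^0_2$ — in effect, invoking the conservativity of $\RT^2_2$ over $\RCA_0+\ind\Sigma^0_2$ — and it is here that I expect the real work to be, either in citing this preservation result in the exact form needed or in re-proving it over $\RCA^*_0$. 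Granting it, $M_0\models\RR^2$ and hence $M\models\RR^2$ since $\RR^2$ is first-order. A fallback staying inside the cut framework would take, for $j$ largest with $M\models\ind\Sigma_j$, a proper $\Sigma_{j+1}$-definable cut $I$ and try to show $(I,\Cod(M/I))\models\ACA_0$; but the straightforward coding of jumps over $I$ into $M$ via Corollary~\ref{cor:chong-mourad-cut} is blocked by the absence of $\bd\Sigma_{j+2}$, so $I$ would have to be chosen as a sufficiently regular cut — which is exactly the delicate point.
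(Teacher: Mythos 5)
Your proof is correct and takes essentially the same route as the paper's: the double application of Theorem \ref{thm:rt-on-cut} to a lightface $\Sigma_1$-definable cut for part \ref{it:rt22-neg-is1}, the case split on $\ind\Sigma^0_1$ plus Hirst's theorem for part \ref{it:rt22-is1}, and the $\Sigma_2$-definable-$\omega$ model for the consistency claim in part \ref{it:rt22-bs2} all match the paper. The only divergence is part \ref{it:rt22-is2}, where the ``real work'' you anticipate is not needed: since $\RCA^*_0+\RT^2_2\subseteq\RCA_0+\ind\Sigma^0_2+\RT^2_2$, every sentence of $\RR^2$ is a first-order consequence of the latter theory and hence, by the cited Cholak--Jockusch--Slaman conservativity over $\ind\Sigma_2$, a theorem of $\ind\Sigma_2$ -- no iteration of their construction and no reproof over $\RCA^*_0$ is required, and this is exactly how the paper disposes of that part.
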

\begin{proof}
We first prove \ref{it:rt22-neg-is1}. 
Clearly, if $M\models \bd\Sigma_1 + \exp$ and $(M,\Delta_{1}$-$\mathrm{Def}(M)) \models \RT^2_2$,
then $M$ satisfies $\RR^2$ (as well as $\neg\ind\Sigma_1$, by Lemma \ref{lem:jockusch}).
On the other hand, let $(M,\X) \models {\RCA^*_0} + {\RT^2_2} + {\neg\ind\Sigma_1}$.
Obviously, $M$ satisfies $\bd \Sigma_1 + \exp$. 
Let $I$ be a proper $\Sigma_1$-definable cut in $M$. 
Applying Theorem \ref{thm:rt-on-cut} two times, we get first $(I,\Cod(M/I)) \models \RT^2_2$
and then $(M,\Delta_{1}$-$\mathrm{Def}(M)) \models \RT^2_2$.

Statement \ref{it:rt22-is2} follows immediately from the result of \cite{cholak-jockusch-slaman}
that $\RCA_0 + \ind\Sigma^0_2 + \RT^2_2$ is conservative over $\ind\Sigma_2$.

We turn to \ref{it:rt22-bs2}. It is clear that $\RR^2$ is implied
by the first-order consequences of $\RCA_0 + \RT^2_2$.
Meanwhile, $\RR^2$ is also satisfied by any model $M \models \bd\Sigma_2 + \Delta_{2}$-$\RT^2_2$ since 
$(M, \Delta_2$-$\mathrm{Def}(M))\models \RCA^*_0+\RT^{2}_{2}$.
It remains to argue that such a model exists.
To see this, take $M \models \bd \Sigma_2$ with
$\Sigma_2$-definable $\omega$ and $(\omega,\mathrm{SSy}(M)) \models \RT^2_2$,
and apply Theorem \ref{thm:rt-on-cut} to the model
$(M, \Delta_2$-$\mathrm{Def}(M))\models \RCA^*_0$.

Finally, to see that \ref{it:rt22-is1} holds, 
let $\psi$ be provable both in $\bd \Sigma_2$ and in $\RCA^*_0 + \neg \ind\Sigma^0_1$.
We check that $\RCA^*_0 + \RT^2_2\vdash \psi$.
Let $(M,\X) \models \RCA^*_0 + \RT^2_2 $. 
If $(M,\X) \models \RCA_0$, then $M \models \bd\Sigma_2$,
so $M \models \psi$. 
Otherwise, $M \models \RCA^*_0 + \neg \ind\Sigma^0_1$,
so $M \models \psi$ as well.
\end{proof}

Parts \ref{it:rt22-neg-is1} and \ref{it:rt22-is2} of Theorem \ref{thm:rt22-axioms} give a complete axiomatization
of the first-order consequences of $\RCA^*_0 + \RT^2_2$ over, respectively, $\neg \ind\Sigma_1$ and $\ind \Sigma_2$.
However, the situation in the region between $\ind \Sigma_1$ and $\ind \Sigma_2$ is much less clear.

As mentioned in the introduction, it is open whether $\RCA_0 + \RT^2_2$ is arithmetically conservative over $\bd\Sigma_2$.
Therefore, it is consistent with what we know that already $\bd\Sigma_2$ implies 
the first-order consequences of $\RCA^*_0 + \RT^2_2$. 

On the other hand, we will now use
Theorem \ref{thm:rt22-axioms}\ref{it:rt22-is1} to show that there are some first-order sentences 
provable in $\RCA^*_0 + \RT^2_2$ but not in $\ind \Sigma_1$. It will be clear from our argument
that this is not a feature of $\RT^2_2$ specifically, but rather of all principles
that imply $\bd\Sigma^0_2$ (or even somewhat weaker statements) over $\RCA_0$.

\begin{definition}
For each $\ell \ge 1$, the $\Sigma_\ell$ \emph{cardinality scheme}, 
$\mathrm{C}\Sigma_\ell$, asserts that no $\Sigma_\ell$ formula
defines a total injection with bounded range.

The $\Sigma_\ell$ \emph{generalized pigeonhole principle}, $\mathrm{GPHP}(\Sigma_\ell)$,
asserts that for every $\Sigma_\ell$ formula $\varphi(x,y,z)$ and every number $a$,
there exists a number $b$ such that there is no $c$ for which $\varphi(\cdot,\cdot,c)$ defines
an injective multifunction from $b$ into $a$:
\[\forall a\, \exists b \, \forall c\,  [\as{x}{b}\es{y}{a}\varphi(x,y,c) \Rightarrow
\neg \as{y}{a}\exists^{\le 1}x\! < \! b \,\varphi(x,y,c)]. \]
\end{definition}

The principle $\mathrm{C}\Sigma_\ell$ was defined in \cite{seetapun-slaman}.
It is known that $\ind\Sigma_\ell$ does not imply $\mathrm{C}\Sigma_{\ell+1}$ \cite[Proposition 3.1]{groszek-slaman}. 
The principle $\mathrm{GPHP}(\Sigma_\ell)$ was defined in \cite{kaye:constructing-kappa-like},
where it was also observed that the theory of all $\kappa$-like models of $\ind\Delta_0$
implies $\mathrm{GPHP}(\Sigma_\ell)$ for all $\ell$.

Clearly, $\mathrm{GPHP}(\Sigma_\ell)$ implies $\mathrm{C}\Sigma_\ell$ for each $\ell \ge 1$.
For $\ell \ge 2$, $\mathrm{GPHP}(\Sigma_\ell)$ is in turn implied by $\bd\Sigma_\ell$,
since the latter is, for each $\ell \ge 1$, equivalent to the usual pigeonhole principle for $\Sigma_\ell$ maps
over $\ind\Delta_0 +\exp$ \cite{dimitracopoulos-paris:php}.
It follows from \cite{bcwwy:php-konig} that the implication from $\bd\Sigma_\ell$
to $\mathrm{GPHP}(\Sigma_\ell)$ is strict.

$\mathrm{C}\Sigma_2$ is known to be a consequence of some theories studied
in reverse mathematics that do not imply $\bd\Sigma_2$, 
such as~$\RCA_0$ plus the Rainbow Ramsey Theorem for pairs \cite{conidis-slaman} 
and $\RCA_0$ plus the existence of 2-random reals \cite{haken:thesis}.

In the theorem below, we explicitly indicate second-order variables to emphasize the role played by set parameters in the second part of the statement. Recall that $\ind\Sigma^{0}_{k}$ (resp.~$\bd\Sigma^{0}_{k}$) means $\forall X\,\ind\Sigma_{k}(X)$ (resp.~$\forall X\,\bd\Sigma_{k}(X)$).

\begin{theorem}\label{thm:neg-is1-cs2}
For each $k,\ell \ge 1$, the following statements are provable in $\RCA^*_0$:
\begin{enumerate}[(a)]
\item\label{it:bs2-cs2} $\forall X\,(\bd\Sigma_\ell(X)\Rightarrow \mathrm{GPHP}(\Sigma_\ell(X)))$,  
\item\label{it:neg-is1-cs2} $(\bd\Sigma^{0}_{k} \wedge\neg\ind\Sigma^0_{k})\Rightarrow \forall X\,\mathrm{GPHP}(\Sigma_\ell(X))$.
\end{enumerate}
\end{theorem}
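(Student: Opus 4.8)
The plan is to prove both parts by essentially the same mechanism: over $\RCA^*_0$, the hypotheses force the existence of a proper $\Sigma^0_1(X)$-definable cut, and a $\Sigma_\ell(X)$-definable injective multifunction of the kind forbidden by $\mathrm{GPHP}(\Sigma_\ell(X))$ would, by a pigeonhole/iteration argument, let one define a total $\Sigma_\ell(X)$ injection from $\N$ into a bounded set, contradicting the fact that such injections cannot exist when the relevant cut is a genuine proper cut. Since $\RCA^*_0$ proves $\bd\Sigma^0_1 + \exp$, we have enough to formalize the combinatorics of the pigeonhole argument; the only subtlety is keeping track of which induction is available in each case.

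For part \ref{it:bs2-cs2}, I would work inside $\RCA^*_0$, fix a set $X$, assume $\bd\Sigma_\ell(X)$, and suppose toward a contradiction that $\mathrm{GPHP}(\Sigma_\ell(X))$ fails: there is a $\Sigma_\ell(X)$ formula $\varphi$ and a number $a$ such that for every $b$ there is a $c = c_b$ with $\varphi(\cdot,\cdot,c_b)$ an injective multifunction from $b$ into $a$. The key step is to extract from this a single total injection contradicting the ordinary $\Sigma_\ell(X)$ pigeonhole principle, which over $\ind\Delta_0+\exp$ is equivalent to $\bd\Sigma_\ell(X)$ by Dimitracopoulos–Paris. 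One standard way: first reduce multifunctions to functions by always taking the least witness, turning $\varphi(\cdot,\cdot,c_b)$ into a genuine injection $f_b\colon b \to a$; these need not cohere as $b$ grows, but by $\bd\Sigma_\ell(X)$ (strong $\Sigma_\ell(X)$ collection) one can for each $b$ bound the parameter $c_b$ and the witnesses uniformly, and then a König-type/pigeonhole argument on the finitely many possible ``types'' of initial segments (there are at most $a^b$ functions $b\to a$, so for a suitable nonstandard-style argument one uses that the number of functions is bounded) produces a single $\Sigma_\ell(X)$-definable injection from all of $\N$ into $a$. That contradicts $\bd\Sigma_\ell(X)$. I expect the cleanest route is actually to argue the contrapositive via $\mathrm{C}\Sigma_\ell(X)$: show directly that failure of $\mathrm{GPHP}(\Sigma_\ell(X))$ yields a total bounded $\Sigma_\ell(X)$ injection, hence failure of $\mathrm{C}\Sigma_\ell(X)$, and note $\bd\Sigma_\ell(X)$ implies $\mathrm{C}\Sigma_\ell(X)$.

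For part \ref{it:neg-is1-cs2}, assume $\bd\Sigma^0_k \wedge \neg\ind\Sigma^0_k$ and fix $X$ and $\ell$. From $\neg\ind\Sigma^0_k$ together with $\bd\Sigma^0_k$ one obtains a proper $\Sigma^0_k$-definable cut $I \subsetneq M$; this is the analogue, for the $k$-th level, of the standard fact used throughout the paper that $\neg\ind\Sigma^0_1$ yields a proper $\Sigma^0_1$-definable cut. Now suppose $\mathrm{GPHP}(\Sigma_\ell(X))$ fails via $\varphi$ and $a$. The plan is to instantiate $b$ at a carefully chosen large element and build, using the witnessing parameters $c_b$, a $\Sigma_\ell(X)$-definable injection whose domain includes the cut $I$ but whose range sits inside $a$; since $I$ has no largest element but $a$ is a fixed number, this is absurd once we push the domain past $a$ — more precisely, restricting such an injection to any $(a{+}1)$-element subset of its domain (which exists since $\bd\Sigma^0_1$ and $\exp$ give us finite sets of every standard-below-$M$ size, and certainly of size $a+1$) violates injectivity into $a$. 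The role of the cut is to guarantee we can find the needed witnessing parameter $c_b$ for $b$ chosen above $a$ while keeping everything $\Sigma_\ell(X)$-definable; the point is that $\neg\ind\Sigma^0_k$ is exactly what makes the ``global'' pigeonhole collapse even though local (bounded) induction survives.

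The main obstacle, in both parts, is the bookkeeping needed to convert a family of inconsistent bounded injective multifunctions $\{\varphi(\cdot,\cdot,c_b)\}_b$ into one honest total $\Sigma_\ell(X)$-definable injection into a fixed bounded set, while staying within the available induction — in part \ref{it:bs2-cs2} only $\bd\Sigma_\ell(X)$, and in part \ref{it:neg-is1-cs2} only $\bd\Sigma^0_k$ plus bounded induction. I would handle this by the standard device of passing to least witnesses and then arguing that among the boundedly many functions $b \to a$ there must be one that extends to arbitrarily long initial segments; formalizing ``extends to arbitrarily long initial segments, hence is total'' is where one must be careful, since this is itself a mild induction, but it is a $\Sigma_\ell(X)$- (resp.\ $\Sigma^0_k$-)bounded statement and hence available. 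Once the total bounded injection is in hand, the contradiction with $\mathrm{C}\Sigma_\ell(X)$ (a consequence of $\bd\Sigma_\ell(X)$, resp.\ forced by the proper cut coming from $\neg\ind\Sigma^0_k$) is immediate.
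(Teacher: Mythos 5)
There is a genuine gap, and it is concentrated in part \ref{it:neg-is1-cs2}. Your argument there culminates in the step ``restricting such an injection to any $(a{+}1)$-element subset of its domain violates injectivity into $a$.'' That step is exactly the finite pigeonhole principle for $\Sigma_\ell(X)$-definable maps, which by the Dimitracopoulos--Paris result cited in the paper is \emph{equivalent} to $\bd\Sigma_\ell(X)$ over $\ind\Delta_0+\exp$. But in part \ref{it:neg-is1-cs2} you are only given $\bd\Sigma^0_k$ for a possibly much smaller $k$; the whole content of the statement is that $\neg\ind\Sigma^0_k$ forces $\mathrm{GPHP}$ at \emph{every} level $\ell$, including levels where collection, induction, and the pigeonhole principle all fail. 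So your proof assumes the conclusion (or rather a principle equivalent to it). Having a coded $(a{+}1)$-element set in the domain does not help: counting the $\Sigma_\ell$-images still needs level-$\ell$ induction or collection. Your reading of the role of the cut is also off -- the witnessing parameters $c_b$ come for free from the assumed failure of $\mathrm{GPHP}$, not from the cut. The paper's actual proof of \ref{it:neg-is1-cs2} is model-theoretic: it end-extends $M$ to a model of $\ind\Delta_0$ in which the witnessing set $A$ is coded by some small $d$, uses Lessan's theorem to get recursive saturation of $[0,d]$, and then produces an automorphism of $M$ fixing the parameter $c$ and $[0,a]$ pointwise while moving some $x<b$; any injective multifunction from $b$ into $a$ definable from $c$ would be fixed setwise and forces $x=\alpha(x)$, a contradiction. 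The authors explicitly remark that finding a direct, non-model-theoretic proof of $\mathrm{GPHP}(\Sigma_\ell)$ from, say, $\bd\Sigma_1+\exp+\neg\ind\Sigma_1$ is open -- which is what your sketch is attempting.

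Part \ref{it:bs2-cs2} is salvageable but overcomplicated. You do not need a \emph{total} injection or a detour through $\mathrm{C}\Sigma_\ell(X)$: given $a$, take $b=a+1$; any injective multifunction from $a+1$ into $a$ immediately contradicts the $\Sigma_\ell(X)$ pigeonhole principle, which is available because it is equivalent to $\bd\Sigma_\ell(X)$. This single-instance argument is what the paper means by ``the argument relativizes with no issues.'' Your proposed K\"onig-type passage from the incoherent family $\{\varphi(\cdot,\cdot,c_b)\}_b$ to one total injection $\N\to a$ is itself unjustified in $\bd\Sigma_\ell(X)$ (the tree of partial functions into $a$ has width growing with $b$, and selecting an infinite branch definably is not a ``$\Sigma_\ell$-bounded statement''), so even as a roundabout route it does not go through as written.
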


Theorem \ref{thm:neg-is1-cs2} part \ref{it:neg-is1-cs2} can be obtained by relativizing Kaye's proof of the result  that any model of $\bd\Sigma_1 + \exp + {\neg\ind\Sigma_1}$ is elementarily equivalent to an $\aleph_\omega$-like structure \cite[Theorem 2.4]{kaye:constructing-kappa-like}. 
A model of $\neg\ind\Sigma_1(A) + \neg \mathrm{GPHP}(\Sigma_\ell(B)) + \bd\Sigma_1(A\oplus B) + \exp$ would also be elementary equivalent to $\aleph_\omega$-like model, but clearly such a structure can never violate the scheme 
$\mathrm{GPHP}(\Gamma)$ for any class of formulas $\Gamma$.

The proof of Theorem \ref{thm:neg-is1-cs2} we give below is considerably simpler than that of
\cite[Theorem 2.4]{kaye:constructing-kappa-like}. On the other hand, both make use of an automorphism argument.
It would be interesting to come up with a direct proof of $\mathrm{GPHP}(\Sigma_\ell)$, with no model-theoretic detours,
in for instance ${\bd\Sigma_1} + {\exp} + {\neg\ind\Sigma_1}$, .

\begin{proof}
It has already been mentioned that $\bd\Sigma_\ell + \exp$ implies $\mathrm{GPHP}(\Sigma_\ell)$. 
The argument for this relativizes with no issues, thus proving part \ref{it:bs2-cs2}.

It remains to prove that $\RCA^*_0 +\bd\Sigma^{0}_{k} + \neg\ind\Sigma^0_{k}$ 
implies $\mathrm{GPHP}(\Sigma^0_\ell)$ for any $\ell$. To simplify notation, we restrict
ourselves to the case where $k=1$ and to $\mathrm{GPHP}$ for lightface $\Sigma_\ell$ formulas.
The general case for $k \ge 1$ and a $\Sigma_\ell(B)$ formula
reduces to this one by considering the model of $\RCA^*_0$ given by the 
$\Delta_k(A\oplus B)$-definable sets, where $A$ is a parameter witnessing the failure of $\ind\Sigma^0_k$.

Let $(M,A)$ be a countable model of $\bd\Sigma_1(A) + {\exp} + \neg\ind\Sigma_1(A)$.
We may assume that $A$ itself has an increasing enumeration $A = \{a_i: i \in I\}$
for a proper cut $I \subseteq M$.
By a routine compactness argument, we may also assume that for every $a \in M$
there is some $b \in M$ such that $b > \exp_m(a)$ for each $m \in \omega$.
To prove that $M \models \mathrm{GPHP}(\Sigma_\ell)$, we will use a technique based
on the fact that models of $\bd\Sigma^0_1 + {\exp} + {\neg\ind\Sigma^0_1}$ 
have many automorphisms \cite{kossak:extensions, kossak:extensions-correction, kaye:properties}. 

By a standard argument (see e.g.~\cite[Theorem 4.6]{enayat-wong}), the model $M$ can be end-extended
to a model $K \models \ind \Delta_0$ such that $A \in \Cod(K/M)$. Since elements
coding $A$ are downwards cofinal in $K \setminus M$, there is an element $d \in K$
coding $A$ and small enough that $\exp_2(d)$ exists in $K$. By \cite{lessan:thesis},
there is a $\Delta_0$ formula with parameter $\exp_2(d)$ that
defines satisfaction for $\Delta_0$ formulas on arguments below $d$.
As a consequence, the structure $[0,d]$ (with addition and multiplication as ternary relations) 
is recursively saturated.

Now let $a \in M\setminus I$ and let $b\in M$ be such that $b > \exp_m(a)$ for each $m \in \omega$.
Let $c \in M$ be arbitrary. 
The recursive saturation of $[0,d]$ lets us use an argument dating back to 
\cite{kotlarski:elem-cuts-rec-sat} (see the proof of Lemma 3.4 in \cite{kaye:properties} 
for a detailed argument and \cite{kossak:extensions-correction} for a brief discussion)
to derive the existence of an automorphism $\alpha$ of $[0,d]$ such that
$\alpha$ fixes $c,d$ and fixes $[0,a]$ pointwise, but there is some $x < b$ with $x \neq \alpha(x)=:y$. 
For each $i \in I$, since $\alpha(i) = i$,
$\alpha(d) = d$, and $d$ codes $A$, we know that $\alpha(a_i) = a_i$.
Therefore, $\alpha[M] = M$, so $\alpha{\upharpoonright}_M$ is actually an automorphism of $M$.
We now argue that no injective multifunction from $b$ to $a$ is definable in $M$ with $c$ as parameter.
Otherwise, if $f$ were such a multifunction, there would be some $z < a$ such that
$z \in f(x)$, and therefore (since $\alpha$ fixes both $z$ and $c$) also
$z = \alpha(z) \in f(\alpha(x)) = f(y)$. By the injectivity of $f$, this
would imply $x = y$, a contradiction. Since $c \in M$ was arbitrary,
this proves that there can be no injective multifunction from $b$ to $a$
definable in $M$, so $M \models \mathrm{GPHP}(\Sigma_\ell)$ for each $\ell$.
\end{proof}

\begin{corollary}\label{cor:rt22-cs2}
$\RCA^*_0 + \RT^2_2$ proves both $\mathrm{C}\Sigma_2$ and $\mathrm{GPHP}(\Sigma_2)$.
\end{corollary}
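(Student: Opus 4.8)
The plan is to read the corollary off from Theorem~\ref{thm:rt22-axioms}\ref{it:rt22-is1} and Theorem~\ref{thm:neg-is1-cs2}, with no new model-theoretic work. First I would reduce to a single sentence: since $\mathrm{GPHP}(\Sigma_2)$ implies $\mathrm{C}\Sigma_2$ (as noted right after the definition of the two schemes), it is enough to prove $\RCA^*_0 + \RT^2_2 \vdash \mathrm{GPHP}(\Sigma_2)$, i.e.~$\RR^2 \vdash \mathrm{GPHP}(\Sigma_2)$. As $\mathrm{GPHP}(\Sigma_2)$ is a first-order sentence, Theorem~\ref{thm:rt22-axioms}\ref{it:rt22-is1} tells us it will be enough to verify the two hypotheses $\bd\Sigma_2 \vdash \mathrm{GPHP}(\Sigma_2)$ and $\RCA^*_0 + \neg\ind\Sigma^0_1 \vdash \mathrm{GPHP}(\Sigma_2)$.

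Both of these are instances of Theorem~\ref{thm:neg-is1-cs2}. For the first hypothesis I would invoke part~\ref{it:bs2-cs2} with $\ell = 2$ and trivial parameter (equivalently, the already-recalled fact that $\bd\Sigma_\ell$ implies $\mathrm{GPHP}(\Sigma_\ell)$ for $\ell \ge 2$). For the second I would invoke part~\ref{it:neg-is1-cs2} with $k = 1$ and $\ell = 2$: since $\RCA^*_0$ proves $\bd\Sigma^0_1$, over $\RCA^*_0$ the assumption $\neg\ind\Sigma^0_1$ yields $\bd\Sigma^0_1 \wedge \neg\ind\Sigma^0_1$ and hence $\forall X\,\mathrm{GPHP}(\Sigma_2(X))$, in particular the lightface $\mathrm{GPHP}(\Sigma_2)$. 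Plugging both facts back into Theorem~\ref{thm:rt22-axioms}\ref{it:rt22-is1} gives $\RR^2 \vdash \mathrm{GPHP}(\Sigma_2)$, so $\RCA^*_0 + \RT^2_2$ proves $\mathrm{GPHP}(\Sigma_2)$ and therefore $\mathrm{C}\Sigma_2$ as well.

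I do not expect a genuine obstacle: the corollary is a bookkeeping consequence of the two preceding theorems, which already contain all the substance (the case split on whether the ambient model satisfies $\RCA_0$, in the proof of Theorem~\ref{thm:rt22-axioms}\ref{it:rt22-is1}, and the automorphism argument, in the proof of Theorem~\ref{thm:neg-is1-cs2}\ref{it:neg-is1-cs2}). The only point needing a little care is to apply the schemes in their lightface form, so that $\psi := \mathrm{GPHP}(\Sigma_2)$ is a single first-order sentence to which Theorem~\ref{thm:rt22-axioms}\ref{it:rt22-is1} can be applied.
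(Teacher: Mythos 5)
Your proposal is correct and matches the paper's own proof: the corollary is obtained exactly by applying Theorem~\ref{thm:rt22-axioms}\ref{it:rt22-is1} to $\psi = \mathrm{GPHP}(\Sigma_2)$, verifying the two hypotheses via Theorem~\ref{thm:neg-is1-cs2} (part~\ref{it:bs2-cs2}, equivalently the recalled fact that $\bd\Sigma_2$ implies $\mathrm{GPHP}(\Sigma_2)$, and part~\ref{it:neg-is1-cs2}), and then using $\mathrm{GPHP}(\Sigma_2) \Rightarrow \mathrm{C}\Sigma_2$. Your write-up just spells out the same bookkeeping in more detail than the paper does.
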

\begin{proof}
This is a direct consequence of Theorem \ref{thm:rt22-axioms}\ref{it:rt22-is1}, Theorem \ref{thm:neg-is1-cs2}\ref{it:neg-is1-cs2},
and the fact that $\mathrm{GPHP}(\Sigma_\ell)$ implies $\mathrm{C}\Sigma_\ell$.
\end{proof}


\begin{remark}
Let the usual relativization of $\IB$, namely $\forall X\,(\ind \Sigma_k(X) \Rightarrow \bd \Sigma_{k+1}(X))$ for each $k$,
be called ``strong'', and let the ``weak'' relativization of $\IB$ consist of the statements 
$\ind\Sigma^0_k \Rightarrow \bd \Sigma^0_{k+1}$ for each $k$. 
In Theorem \ref{thm:rt32-fragments-pa}, we showed that $\RCA^*_0 + \RT^3_2$ implies strong relativized $\IB$. 
On the other hand, Theorem \ref{thm:neg-is1-cs2} implies that already weak relativized $\IB$, 
and even its restriction to $k < \ell$, suffices to prove $\mathrm{GPHP}(\Sigma_\ell)$.
 
This lets us prove Corollary \ref{cor:rt22-cs2} by exploiting the fact that $\RCA^*_0 + \RT^2_2$ 
implies the restriction of weak relativized $\IB$ to $k = 0,1$.
\end{remark}

The known relationships between the first-order consequences of $\RCA^*_0 + \RT^2_2$
and fragments of first-order arithmetic are summarized in the following corollary. 

\begin{corollary}\label{cor:rt22-fragments-pa}
The first-order consequences of $\RCA^*_0 + \RT^2_2$ follow from $\ind \Sigma_2$.
The $\Pi_3$ consequences coincide with $\bd \Sigma_1 + \exp$.
The $\Pi_4$ consequences are strictly weaker than $\bd \Sigma_2$ but do not follow from $\ind \Sigma_1$.
\end{corollary}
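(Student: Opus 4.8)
The plan is to obtain the statement by assembling earlier results, routing the two ``upper bound'' clauses through the case of triples. Write $\RR^2$, as in Theorem~\ref{thm:rt22-axioms}, for the first-order consequences of $\RCA^*_0 + \RT^2_2$, and let $\RR^3$ be those of $\RCA^*_0 + \RT^3_2$. The first assertion, that $\RR^2$ follows from $\ind\Sigma_2$, is precisely Theorem~\ref{thm:rt22-axioms}\ref{it:rt22-is2}. Next I would note that $\RCA^*_0 \vdash \RT^3_2 \Rightarrow \RT^2_2$ by the standard dummy-variable padding (an infinite set homogeneous for $g(x,y,z) := f(x,y)$ is homogeneous for $f$, and no induction is used), so $\RR^2 \subseteq \RR^3$. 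Consequently the $\Pi_3$ consequences of $\RR^2$ are contained in those of $\RR^3$, which coincide with $\bd\Sigma_1 + \exp$ by Theorem~\ref{thm:rt32-fragments-pa}\ref{it:rt32-fo-levels}; since $\bd\Sigma_1 + \exp$ is the first-order part of $\RCA^*_0$ and is $\Pi_3$-axiomatizable, the reverse inclusion is immediate, so the $\Pi_3$ consequences of $\RR^2$ are exactly $\bd\Sigma_1 + \exp$. In the same way, the $\Pi_4$ consequences of $\RR^2$ are contained in those of $\RR^3$, which by the same theorem are contained in $\bd\Sigma_2$. (Alternatively, these two inclusions can be obtained by lifting directly the cut constructions from the proof of Theorem~\ref{thm:rt32-fragments-pa}\ref{it:rt32-fo-levels} showing that the $\Pi_3$ consequences of $\RR^n$ follow from $\bd\Sigma_1 + \exp$ and the $\Pi_{\ell+3}$ consequences from $\bd\Sigma_{\ell+1}$: those halves of the argument use only Theorem~\ref{thm:rt-on-cut}, which holds for all standard $n,k$, and never use Lemma~\ref{lem:rt-gives-jump}, so they apply unchanged when $n = 2$.)

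It then remains to prove the two facts peculiar to pairs: that the inclusion of the $\Pi_4$ consequences in $\bd\Sigma_2$ is proper, and that they do not all follow from $\ind\Sigma_1$. For properness, Theorem~\ref{thm:rt22-axioms}\ref{it:rt22-neg-is1} provides a model of $\RCA^*_0 + \RT^2_2 + \neg\ind\Sigma_1$, so $\RR^2 \not\vdash \ind\Sigma_1$; since $\bd\Sigma_2 \vdash \ind\Sigma_1$ and $\ind\Sigma_1$ is axiomatized by $\Pi_3$ (hence $\Pi_4$) sentences, some $\Pi_3$ consequence of $\bd\Sigma_2$ is not provable in $\RR^2$. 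For the second fact, Corollary~\ref{cor:rt22-cs2} gives $\RCA^*_0 + \RT^2_2 \vdash \mathrm{C}\Sigma_2$. A single instance of $\mathrm{C}\Sigma_2$, about a fixed $\Sigma_2$ formula $\varphi$, is a $\Pi_4$ sentence: ``$\varphi$ defines a total function with range below $a$'' unpacks into a $\Pi_3$ clause (``exactly one value below $a$'', quantified universally over the argument) conjoined with a $\Pi_2$ injectivity clause, so it is $\Pi_3$; its negation is $\Sigma_3$, and prefixing $\forall a$ makes the instance $\Pi_4$. By \cite[Proposition~3.1]{groszek-slaman}, $\ind\Sigma_1$ does not prove $\mathrm{C}\Sigma_2$, hence fails to prove some such instance; that instance is a $\Pi_4$ consequence of $\RR^2$ that is not provable in $\ind\Sigma_1$.

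I do not expect a genuinely hard step: the content is in selecting the right combination of Theorems~\ref{thm:rt22-axioms}, \ref{thm:rt32-fragments-pa} and \ref{thm:rt-on-cut} together with Corollary~\ref{cor:rt22-cs2}. The two points that warrant a second look are the (routine) verification that $\RT^{n+1}_k \Rightarrow \RT^n_k$ survives the passage from $\RCA_0$ to $\RCA^*_0$ -- it does, because the padding argument uses no induction -- and the quantifier count for a $\mathrm{C}\Sigma_2$ instance. For the latter it is worth adding that the instance furnished by \cite{groszek-slaman} cannot be equivalent over $\bd\Sigma_1 + \exp$ to a $\Pi_3$ sentence: otherwise, being a theorem of $\RR^2$, it would lie in the $\Pi_3$ part of $\RR^2$, that is in $\bd\Sigma_1 + \exp$, hence in $\ind\Sigma_1$, contradicting \cite{groszek-slaman}. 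So the mention of the fourth level in the last clause of the corollary is forced rather than an accident of the bookkeeping.
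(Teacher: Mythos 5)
Your proposal is correct, and for three of the four claims it is exactly the paper's argument: the first claim is Theorem \ref{thm:rt22-axioms}\ref{it:rt22-is2} verbatim, and the ``not contained in $\ind\Sigma_1$'' claim is, as in the paper, Corollary \ref{cor:rt22-cs2} plus \cite{groszek-slaman} plus the observation that each instance of $\mathrm{C}\Sigma_2$ is $\Pi_4$ (your quantifier count is right, and your closing remark that the witnessing instance cannot be $\Pi_3$-equivalent over the base theory is a sound sanity check). The only divergence is in the two ``upper bound'' claims. The paper's proof says these are ``proved like in Theorem \ref{thm:rt32-fragments-pa}'', i.e.\ by re-running the cut construction for $n=2$ -- which is precisely your parenthetical alternative, and you correctly note that those halves of the argument use only Theorem \ref{thm:rt-on-cut} and never Lemma \ref{lem:rt-gives-jump}, so they go through unchanged. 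Your primary route instead observes $\RCA^*_0 \vdash \RT^3_2 \Rightarrow \RT^2_2$ (the padding argument indeed needs no induction, only unboundedness of $H$) and inherits the upper bounds from Theorem \ref{thm:rt32-fragments-pa}\ref{it:rt32-fo-levels}; this is a legitimate shortcut that avoids repeating the construction, and it even yields strictness of the $\Pi_4$ inclusion for free, since $\Pi_4(\RR^2) \subseteq \Pi_4(\RR^3) \subsetneq \bd\Sigma_2$. Your separate strictness argument via a model of $\RR^2 + \neg\ind\Sigma_1$ is also fine, with the small caveat that Theorem \ref{thm:rt22-axioms}\ref{it:rt22-neg-is1} is an axiomatization statement rather than an existence statement; the model you need is supplied by the consistency of $\bd\Sigma_1 + \exp + \Delta_1$-$\RT^2_2$, recorded in the corollary following Theorem \ref{thm:rt32-fragments-pa} (which explicitly covers $n=2$), together with Lemma \ref{lem:jockusch}\ref{it:jockusch-1}.
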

\begin{proof}
The provability from $\ind\Sigma_2$ is part \ref{it:rt22-is2} of Theorem \ref{thm:rt22-axioms}.
The fact that the $\Pi_3$ consequences of $\RCA^*_0 + \RT^2_2$ coincide with $\bd \Sigma_1 + \exp$
and that the $\Pi_4$ consequences are strictly weaker than $\bd \Sigma_2$ is proved like in Theorem \ref{thm:rt32-fragments-pa}.
Finally, Corollary \ref{cor:rt22-cs2} implies that $\mathrm{C}\Sigma_2$ is an example
of a $\Pi_4$ sentence that follows from $\RCA^*_0 + \RT^2_2$ but not $\ind\Sigma_1$.
\end{proof}

Of course, quite a few questions remain. Over $\bd\Sigma_2$, 
one basic issue is whether the first-order consequences
of $\RCA^*_0 + \RT^2_2 + \bd\Sigma_2$ are non-trivial, and another is 
how closely related they are to those of $\RCA_0 + \RT^2_2$.

\begin{question}
Is $\RCA^*_0 + \RT^2_2 + \bd\Sigma_2$ conservative over $\bd\Sigma_2$?
\end{question}

\begin{question}
Does $\RCA^*_0 + \RT^2_2 + \bd\Sigma_2$ imply
$\psi  \lor (\bd \Sigma_2 \land \Delta_2\textrm{-}\RT^2_2)$ 
for each first-order $\psi$ provable in $\RCA_0 + \RT^2_2$?  
\end{question}

Over $\ind\Sigma_1$, the basic question is:
\begin{question}
Does $\RCA^*_0 +\RT^2_2  + \ind\Sigma_1$ imply $\bd\Sigma_2$? 
\end{question}

We have no strong reasons to believe that the answer is ``yes''.
However, it should be pointed out that, since $\RCA_0 + \RT^2_2$ proves $\bd\Sigma_2$,
answering ``no'' would involve constructing a model of $\ind\Sigma_1+\neg\bd\Sigma_2$
that expands to a model of $\bd\Sigma^0_1 + \neg\ind\Sigma^0_1$ -- 
in the terminology of \cite{kossak:extensions}, a model of $\ind\Sigma_1+\neg\bd\Sigma_2$
that is not always semiregular. The existence of such a model itself seems to be open.

\begin{question}
Does there exist a model $M \models \ind\Sigma_1 + \neg \bd\Sigma_2$ 
that can be expanded to a model $(M,A) \models \bd\Sigma_1(A) + \neg \ind\Sigma_1(A)$?
\end{question}

Note that if there is $M$ witnessing a positive answer to this question such that
$\ind\Sigma_1(A)$ fails in the expansion due to $\omega$ being $\Sigma_1(A)$-definable, 
then by Theorems \ref{thm:rt-on-cut} and \ref{thm:rt32-fragments-pa}
it has to be the case that $(\omega,\mathrm{SSy}(M)) \not \models \ACA_0$.

\section{Relativizing Ramsey}\label{sec:delta02-rt22}

In this final section, we take up the question whether our results on 
$\RCA^*_0 + \RT^2_2$ shed any light on the problem of characterizing the first-order
consequences of $\RCA_0 +\RT^2_2$. To this end, we introduce a principle
in which both the instances and solutions to Ramsey's Theorem are allowed
to be $\Delta^0_2$-sets rather than sets.

\begin{definition}
$\Delta^0_2$-$\RT^2_2$ is the $\Pi^1_2$ statement: ``for every $\Delta^0_2$-set $f$ which is a $2$-colouring of
$[\N]^2$, there exists an infinite homogeneous $\Delta^0_2$-set''.
\end{definition}

Note that $\Delta^0_2$-$\RT^2_2$ is a genuine $\Pi^1_2$ statement, which
should not be confused with the $\Pi^1_1$ statement relativizing
$\Delta_2$-$\RT^2_2$, namely ``for every set $X$, $\Delta_2(X)$-$\RT^2_2$ holds''.
Of course, in a model of the form $(M, \Delta_1\textrm{-}\mathrm{Def}(M))$,
the statement $\Delta^0_2$-$\RT^2_2$ will be equivalent to $\Delta_2$-$\RT^2_2$.

We are interested in studying $\Delta^0_2$-$\RT^2_2$ over $\RCA_0 + \bd\Sigma^0_2$, especially in the case where $\ind \Sigma^0_2$ fails. The following proposition shows that in such a context, $\Delta^0_2$-$\mathrm{RT}^2_2$
behaves somewhat analogously to $\RT^2_2$ over $\RCA^*_0 +\RT^2_2$, so we can investigate it
using the methods developed in Sections \ref{sec:cuts}-\ref{sec:rt22}. 

\begin{lemma}\label{lem:delta02-rt22}
For any model $(M,\X) \models \RCA_0 + \bd\Sigma^0_2$: 
$(M,\X) \models \Delta^0_2$-$\mathrm{RT}^2_2$ iff $(M, \Delta^0_2\textrm{-}\mathrm{Def}(M,\X)) \models \RCA^*_0 + \RT^2_2$. As a consequence:
\begin{enumerate}[(a)]
\item\label{it:delta02-rt22-on-cut} if $I$ is a $\Sigma^0_2$-definable proper cut in $(M,\X)$, then $(M,\X) \models \Delta^0_2$-$\mathrm{RT}^2_2$
iff $(I,\Cod(M/I)\models \RT^2_2$,
\item\label{it:delta02-rt22-axioms} the first-order consequences of ${\RCA_0} + {\bd\Sigma_2} + {\neg \ind\Sigma_2} + {\Delta^0_2}$-$\mathrm{RT}^2_2$ are axiomatized by $\bd\Sigma_2 + \Delta_2$-$\mathrm{RT}^2_2$,
\item\label{it:delta02-rt22-conservativity} $\RCA_0 + \bd\Sigma^0_2 + \Delta^0_2$-$\mathrm{RT}^2_2$ is $\Pi_4$- but not $\Pi_5$-conservative
over $\bd\Sigma_2$.
\end{enumerate}
\end{lemma}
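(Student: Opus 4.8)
\textbf{Proof proposal for Lemma \ref{lem:delta02-rt22}.}

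The plan is to first establish the main equivalence, and then derive the three numbered consequences from it together with results already proved in the earlier sections, relativized to the jump. For the main equivalence, fix $(M,\X)\models \RCA_0 + \bd\Sigma^0_2$ and write $\X' := \Delta^0_2$-$\mathrm{Def}(M,\X)$. First I would check that $(M,\X')\models \RCA^*_0$ always. The set $\X'$ is closed under $\Delta^0_1(\X')$-comprehension since a $\Delta^0_1$ formula with $\Delta^0_2$ parameters defines a $\Delta^0_2$-set (here one uses $\bd\Sigma^0_2$ to collapse the relevant quantifiers, exactly as in the standard fact that $\bd\Sigma_2$ proves $\Delta_2$-definable sets are closed under jumps of bounded families); $\bd\Sigma^0_1(\X')$ and $\Delta_0$-induction follow because $\bd\Sigma^0_1$ relativized to a $\Delta^0_2$ oracle is a consequence of $\bd\Sigma^0_2$ in $(M,\X)$, and $\exp$ is inherited from $M$. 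Granting that, the equivalence ``$(M,\X)\models\Delta^0_2$-$\RT^2_2$ iff $(M,\X')\models\RT^2_2$'' is essentially by unwinding definitions: a $\Delta^0_2$-colouring of $[\N]^2$ in $(M,\X)$ is exactly a colouring that is a set in $\X'$, and likewise for homogeneous sets, so the two statements assert literally the same thing. The one point needing care is that ``infinite'' must mean ``unbounded'' on both sides — but that is the convention in force throughout, so there is no mismatch.

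Once the main equivalence is in hand, part \ref{it:delta02-rt22-on-cut} is immediate: if $I$ is $\Sigma^0_2$-definable in $(M,\X)$, then $I$ is $\Sigma^0_1$-definable in $(M,\X')$, and $(M,\X')\models\RCA^*_0$, so Theorem \ref{thm:rt-on-cut} applies and gives $(M,\X')\models\RT^2_2$ iff $(I,\Cod(M/I))\models\RT^2_2$. Here one should observe that $\Cod(M/I)$ is the same whether computed via coded sets or via $\Delta^0_2$-definable sets intersected with $I$, since under $\bd\Sigma^0_2$ the $\Delta^0_2$-definable subsets of $M$ restricted to $I$ are exactly the subsets of $I$ coded in $M$ (the remark after the definition of $\Cod(M/I)$ in Section 1, applied with $\Gamma = \Delta^0_2$). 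Part \ref{it:delta02-rt22-axioms} is then proved exactly like Theorem \ref{thm:rt22-axioms}\ref{it:rt22-neg-is1}, but one level up: given $(M,\X)\models\RCA_0 + \bd\Sigma_2 + \neg\ind\Sigma_2 + \Delta^0_2$-$\RT^2_2$, pass to $(M,\X')\models\RCA^*_0 + \RT^2_2 + \neg\ind\Sigma^0_1$ (the failure of $\ind\Sigma_2$ in $M$ yields a proper $\Sigma_2$-definable, hence $\Sigma^0_1(\X')$-definable, cut), take such a cut $I$, and apply Theorem \ref{thm:rt-on-cut} twice to land on $(M,\Delta_2$-$\mathrm{Def}(M))\models\RT^2_2$; conversely any $M\models\bd\Sigma_2 + \Delta_2$-$\RT^2_2$ expands to a model of the left-hand theory by taking $\X = \Delta_1$-$\mathrm{Def}(M)$ and noting $\Delta_2$-$\mathrm{Def}(M) = \Delta^0_2$-$\mathrm{Def}(M,\X)$.

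For part \ref{it:delta02-rt22-conservativity}, $\Pi_4$-conservativity over $\bd\Sigma_2$ is proved by the same cut-construction as in the $\Pi_4$ case of Theorem \ref{thm:rt32-fragments-pa} (equivalently Corollary \ref{cor:rt22-fragments-pa}), lifted by one level: given a $\Sigma_4$ sentence $\psi$ consistent with $\bd\Sigma_2$, take $K\models\bd\Sigma_2 + \psi$ with $(\omega,\mathrm{SSy}(K))\models\RT^2_2$, build a $\Sigma_3$-definable-over-$M$ cut $M$ (using a $\Delta_3$-definable function growing fast enough to witness $\psi$ and to preserve $\Sigma_2$-elementarity, so that $M\models\bd\Sigma_2 + \psi$ and $\omega$ is $\Sigma_3$-definable in $M$), and then $(M,\Delta_2$-$\mathrm{Def}(M))\models\RT^2_2$ by Theorem \ref{thm:rt-on-cut} since $\omega$ is $\Sigma^0_1$-definable in $(M,\Delta_2$-$\mathrm{Def}(M))$; equipping this with the $\Delta^0_2$-sets recovers a model of $\RCA_0 + \bd\Sigma^0_2 + \Delta^0_2$-$\RT^2_2$ satisfying $\psi$. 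The failure of $\Pi_5$-conservativity is the one genuinely new ingredient and I expect it to be the main obstacle: one must exhibit a $\Pi_5$ sentence provable in $\RCA_0 + \bd\Sigma^0_2 + \Delta^0_2$-$\RT^2_2$ but not in $\bd\Sigma_2$. The natural candidate is $\mathrm{C}\Sigma_3$ (or $\mathrm{GPHP}(\Sigma_3)$): by the relativized Theorem \ref{thm:rt22-axioms}\ref{it:rt22-is1}/\ref{it:rt22-neg-is1} philosophy, if $\ind\Sigma^0_2$ holds we invoke the Cholak–Jockusch–Slaman conservativity of $\RCA_0+\ind\Sigma^0_2+\RT^2_2$ over $\ind\Sigma_2$ applied to the jump (which at the $\Delta^0_2$ level gives $\mathrm{C}\Sigma_3$ via $\bd\Sigma_2$-level arguments), while if $\ind\Sigma^0_2$ fails we use Theorem \ref{thm:neg-is1-cs2}\ref{it:neg-is1-cs2} with $k=2$ to get $\mathrm{GPHP}(\Sigma_\ell)$ for all $\ell$, in particular $\ell=3$. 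That $\mathrm{C}\Sigma_3$ is unprovable in $\bd\Sigma_2$ follows from \cite{groszek-slaman} (that $\ind\Sigma_2$ does not prove $\mathrm{C}\Sigma_3$, hence neither does the weaker $\bd\Sigma_2$). The delicate bookkeeping will be verifying that $\mathrm{C}\Sigma_3$ is genuinely $\Pi_5$ in the relevant normal form and that the case split on $\ind\Sigma^0_2$ is exhaustive and each branch really delivers it inside $\RCA_0 + \bd\Sigma^0_2 + \Delta^0_2$-$\RT^2_2$; the conservativity half, by contrast, is routine given the machinery of Sections \ref{sec:cuts}–\ref{sec:rt22}.
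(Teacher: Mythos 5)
Your treatment of the main equivalence and of parts \ref{it:delta02-rt22-on-cut} and \ref{it:delta02-rt22-axioms} matches the paper's proof, which likewise treats the equivalence as immediate from the definitions and obtains \ref{it:delta02-rt22-on-cut} from Theorem \ref{thm:rt-on-cut} and \ref{it:delta02-rt22-axioms} by relativizing the proof of Theorem \ref{thm:rt22-axioms}\ref{it:rt22-neg-is1} to $0'$. The one place where you genuinely diverge is the failure of $\Pi_5$-conservativity in \ref{it:delta02-rt22-conservativity}. The paper uses the sentence $\neg\ind\Sigma_2 \Rightarrow \Delta_2$-$\RT^2_2$, which is provable in the theory directly by part \ref{it:delta02-rt22-axioms} and refuted in any $M \models \bd\Sigma_2$ with $\Sigma_2$-definable $\omega$ and $(\omega,\mathrm{SSy}(M)) \not\models \RT^2_2$ --- a two-line argument requiring nothing beyond what is already proved. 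Your witness $\mathrm{GPHP}(\Sigma_3)$ (or $\mathrm{C}\Sigma_3$) also works: it is exactly the relativization to $0'$ of Corollary \ref{cor:rt22-cs2}, via the case split on $\ind\Sigma^0_2$, and its unprovability in $\bd\Sigma_2$ follows from Groszek--Slaman. This buys a ``mathematically natural'' $\Pi_5$ witness at the cost of importing more machinery; note also that your witness is already unprovable in $\RCA_0 + \ind\Sigma^0_2 + \RT^2_2$ (by Cholak--Jockusch--Slaman conservativity over $\ind\Sigma_2$ plus Groszek--Slaman), whereas the status of the paper's witness over $\RCA_0 + \RT^2_2$ is the subject of the subsequent Theorem \ref{thm:rt22-vs-delta02-rt22}.

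Two slips to correct. First, in the $\ind\Sigma^0_2$-holds branch you cite the Cholak--Jockusch--Slaman \emph{conservativity} theorem; that is the wrong tool (conservativity over $\ind\Sigma_2$ would, if anything, tell you the theory does \emph{not} prove $\mathrm{C}\Sigma_3$). What you need there is Hirst's implication $\RCA_0 + \RT^2_2 \vdash \bd\Sigma^0_2$, applied inside $(M,\Delta^0_2\textrm{-}\mathrm{Def}(M,\X))$, which yields $\bd\Sigma_2(0')$ and hence $\bd\Sigma_3$ in $M$, and then $\mathrm{GPHP}(\Sigma_3)$ by Theorem \ref{thm:neg-is1-cs2}\ref{it:bs2-cs2}. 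Second, your $\Pi_4$-conservativity construction has an off-by-one error: to get a cut $M \models \bd\Sigma_2 + \psi$ to which part \ref{it:delta02-rt22-on-cut} applies, $\omega$ must be $\Sigma_2$-definable in $M$ (so that it is $\Sigma^0_1$-definable in $(M,\Delta_2$-$\mathrm{Def}(M))$), and the fast-growing function should be $\Delta_2$-definable; with a merely $\Sigma_3$-definable $\omega$ and a $\Delta_3$-definable function, Theorem \ref{thm:rt-on-cut} does not apply to $(M,\Delta_2$-$\mathrm{Def}(M))$ and the argument fails. The correct lift is of the $\Pi_3$-over-$\bd\Sigma_1+\exp$ case of Theorem \ref{thm:rt32-fragments-pa}\ref{it:rt32-fo-levels}, relativized to $0'$, not of the $\Pi_4$ case.
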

\begin{proof}
The fact that a model $(M,\X)$ satisfies $\RCA_0 + \bd\Sigma^0_2 + \Delta^0_2$-$\mathrm{RT}^2_2$ exactly if $(M, \Delta^0_2\textrm{-}\mathrm{Def}(M,\X)) \models \RCA^*_0 + \RT^2_2$ is immediate from the definitions. Thus
\ref{it:delta02-rt22-on-cut} follows from Theorem \ref{thm:rt-on-cut}, because a cut $I$ is $\Sigma^0_2$ definable
in $(M,\X) \models \bd\Sigma^0_2$ exactly if it is $\Sigma^0_1$-definable in $(M, \Delta^0_2\textrm{-}\mathrm{Def}(M,\X))$.

To prove \ref{it:delta02-rt22-axioms}, repeat the argument 
from the proof of Theorem \ref{thm:rt22-axioms}\ref{it:rt22-neg-is1},
relativizing it to $0'$. If $M \models \bd \Sigma_2 + \Delta_2$-$\mathrm{RT}^2_2$, then
$(M, \Delta_1\textrm{-}\mathrm{Def}(M)) \models {\RCA_0} + {\bd\Sigma_2} + {\neg \ind\Sigma_2} + {\Delta^0_2}$-$\mathrm{RT}^2_2$. In the other direction,
if $(M, \X) \models {\RCA_0} + {\bd\Sigma_2} + {\neg \ind\Sigma_2} + {\Delta^0_2}$-$\mathrm{RT}^2_2$
and $I$ is a proper $\Sigma_2$-definable cut in $M$, then two applications 
of \ref{it:delta02-rt22-on-cut} give first $(I,\Cod(M/I)\models \RT^2_2$ and then
$(M, \Delta_1\textrm{-}\mathrm{Def}(M)) \models \Delta^0_2$-$\mathrm{RT}^2_2$, but the latter
is equivalent to $M \models \Delta_2$-$\mathrm{RT}^2_2$.

To show that ${\RCA_0} + {\bd\Sigma^0_2} + {\Delta^0_2}$-$\mathrm{RT}^2_2$ is $\Pi_4$-conservative
over $\bd\Sigma_2$, relativize to $0'$ the argument used to prove $\Pi_3$-conservativity of $\RCA^*_0 + \RT^n_2$
over $\bd\Sigma_1+ \exp$ in Theorem \ref{thm:rt32-fragments-pa}\ref{it:rt32-fo-levels}. 
To show lack of $\Pi_5$-conservativity, consider the sentence $\neg\ind\Sigma_2 \Rightarrow \Delta_2$-$\mathrm{RT}^2_2$.
This is a $\Pi_5$ statement, and it is provable in ${\RCA_0} + {\bd\Sigma^0_2} + {\Delta^0_2}$-$\mathrm{RT}^2_2$ 
by \ref{it:delta02-rt22-axioms}.
On the other hand, it is not provable in $\bd\Sigma_2$, as can be seen
by applying \ref{it:delta02-rt22-on-cut} to any model $M \models \bd \Sigma_2$ with $\Sigma_2$-definable $\omega$ and
$(\omega, \mathrm{SSy}(M)) \not \models \RT^2_2$. This proves \ref{it:delta02-rt22-conservativity}.
\end{proof}

Since Lemma \ref{lem:delta02-rt22} shows that $\Delta^0_2$-$\mathrm{RT}^2_2$ is not $\Pi_5$-conservative over 
$\bd\Sigma_2$,
while the conservativity of $\RCA_0 + \RT^2_2$  over $\bd\Sigma_2$ is a well-known open problem, 
it is natural to ask whether $\RT^2_2$ might imply $\Delta^0_2$-$\mathrm{RT}^2_2$, at least
in the particularly relevant setting of models of $\bd\Sigma^0_2 + \neg \ind \Sigma^0_2$.

In Theorem \ref{thm:rt22-vs-delta02-rt22} below, we show a negative result: there is no implication in either direction, 
and the sentence we used to prove lack of $\Pi_5$-conservativity of $\Delta^0_2$-$\mathrm{RT}^2_2$ is unprovable in $\RT^2_2$.
To prove this, we will have to make use of a connection between properties of infinite $\Delta_2$-sets 
and the consistency of $\ind \Sigma_1$ that may probably be considered folklore, 
but for which we did not find a suitable reference.
So, we state the connection as a separate lemma and sketch its proof in Appendix A.

\begin{lemma}\label{lem:d2-set-con-is1}
There exists a polynomial $p$ such that $\ind\Sigma_1$ proves:
\begin{multline*}
\forall x\textrm{ \big{[}``every infinite } \Delta_2 \textrm{-set contains at least } {\exp_{p(x)}(2)} \textrm{ elements''}
\\ \Rightarrow \Con_x(\ind \Sigma_1) \big{]},
\end{multline*}
where $\Con_{x}(T)$ means that there is no inconsistency proof in $T$ containing fewer than $x$ symbols.
\end{lemma}

It may be worth pointing out that Lemma \ref{lem:d2-set-con-is1}
is a quantitative version of a weakening of the well-known fact that $\ind\Sigma_2$ 
is equivalent to uniform $\Pi_4$-reflection for $\ind \Delta_0 + \exp$ (see e.g.~\cite[Theorem 7]{beklemishev:reflection-survey}).
To see this, note that (over $\ind\Delta_0 + \exp$ as a base theory) 
$\ind \Sigma_2$ is equivalent to the statement that 
each infinite $\Delta_2$-set contains arbitrarily large finite sets,
while $\Pi_4$-reflection for $\ind \Delta_0 + \exp$ implies $\Con(\ind\Sigma_1)$.

\begin{theorem}\label{thm:rt22-vs-delta02-rt22}
$\RT^2_2$ and $\Delta^0_2$-$\mathrm{RT}^2_2$ are incomparable over $\RCA_0 + \bd\Sigma^0_2 + \neg \ind \Sigma^0_2$. Moreover, $\RCA_0 + \RT^2_2$ does not prove $\neg \ind\Sigma_2 \Rightarrow \Delta_2$-$\mathrm{RT}^2_2$.
\end{theorem}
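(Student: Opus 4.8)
The statement has three parts — that $\RCA_0 + \bd\Sigma^0_2 + \neg\ind\Sigma^0_2 + \RT^2_2$ does not prove $\Delta^0_2$-$\RT^2_2$, the converse non-implication, and that $\RCA_0 + \RT^2_2 \not\vdash \varphi$ where $\varphi := \neg\ind\Sigma_2 \Rightarrow \Delta_2$-$\RT^2_2$ — and the plan is to obtain the first from the third and prove the other two directly. For the first reduction, I would assume $\RCA_0 + \bd\Sigma^0_2 + \neg\ind\Sigma^0_2 + \RT^2_2 \vdash \Delta^0_2$-$\RT^2_2$ and, given $(M,\X) \models \RCA_0 + \RT^2_2$ with $M \models \neg\ind\Sigma_2$, show $M \models \Delta_2$-$\RT^2_2$. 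Since $\RCA_0 + \RT^2_2 \vdash \bd\Sigma^0_2$ \cite{cholak-jockusch-slaman} and lightface failure of $\ind\Sigma_2$ entails $\neg\ind\Sigma^0_2$, the hypothesis gives $(M,\X) \models \Delta^0_2$-$\RT^2_2$. Using $\bd\Sigma_2$, fix a $\Sigma_2$-definable proper cut $I \subseteq M$; as $0'$ is a $\Delta_2$-definable (hence $\Delta^0_2$-definable) set, $I$ is $\Sigma^0_1$-definable in both $(M, \Delta_2\textrm{-}\mathrm{Def}(M))$ and $(M, \Delta^0_2\textrm{-}\mathrm{Def}(M,\X))$, each of which models $\RCA^*_0$. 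Two applications of Theorem \ref{thm:rt-on-cut}, using that $\Cod(M/I)$ does not depend on the second-order part, give
\[(M, \Delta_2\textrm{-}\mathrm{Def}(M)) \models \RT^2_2 \Leftrightarrow (I, \Cod(M/I)) \models \RT^2_2 \Leftrightarrow (M, \Delta^0_2\textrm{-}\mathrm{Def}(M,\X)) \models \RT^2_2,\]
and since $(M,\X) \models \Delta^0_2$-$\RT^2_2$ makes the right-hand structure a model of $\RT^2_2$, we obtain $M \models \Delta_2$-$\RT^2_2$, hence $M \models \varphi$. So only the converse non-implication and the last part remain.

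For the converse non-implication I would exhibit a model of $\RCA_0 + \bd\Sigma^0_2 + \neg\ind\Sigma^0_2 + \Delta^0_2$-$\RT^2_2 + \neg\RT^2_2$. As in Lemma \ref{lem:delta02-rt22}\ref{it:delta02-rt22-axioms}, fix $N \models \bd\Sigma_2$ in which $\omega$ is $\Sigma_2$-definable and $(\omega, \mathrm{SSy}(N)) \models \RT^2_2$ (take a model of $\PA$ whose standard system contains an $\omega$-model of $\RT^2_2$ and cut it down to $\sup_K\{f^m(a) : m \in \omega\}$ for a suitable $\Delta_2$-definable $f$), and put $(M,\X) := (N, \Delta_1\textrm{-}\mathrm{Def}(N))$. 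Then $(M,\X) \models \RCA_0$ (its $\ind\Sigma^0_1$ is just $\ind\Sigma_1$, which follows from $\bd\Sigma_2$) and $(M,\X) \models \bd\Sigma^0_2$ (a $\Sigma^0_2$ formula with a $\Delta_1$-definable set parameter unfolds to a $\Sigma_2$ formula over $N$); moreover $\omega$ is a $\Sigma^0_2$-definable proper cut, so $\neg\ind\Sigma^0_2$. By Lemma \ref{lem:delta02-rt22}\ref{it:delta02-rt22-on-cut} with $I = \omega$, $(M,\X) \models \Delta^0_2$-$\RT^2_2$ iff $(\omega, \Cod(N/\omega)) = (\omega, \mathrm{SSy}(N)) \models \RT^2_2$, which holds. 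Finally, as $N \models \ind\Sigma_1$, Lemma \ref{lem:jockusch}\ref{it:jockusch-1} supplies a $\Delta_1$-definable $2$-colouring of $[N]^2$ with no $\Sigma_1$-definable — \emph{a fortiori} no $\Delta_1$-definable — infinite homogeneous set, so $(M,\X) \models \neg\RT^2_2$ (though this colouring does have a $\Delta^0_2$-definable homogeneous set, as it must by $\Delta^0_2$-$\RT^2_2$).

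The last part is where the real work lies, and the plan is to argue by contradiction using a proof-speedup argument. Suppose $\RCA_0 + \RT^2_2 \vdash \varphi$. First, $\RCA_0 + \RT^2_2 + \neg\ind\Sigma_2$ is consistent: otherwise $\RCA_0 + \RT^2_2 \vdash \ind\Sigma_2$, and relativizing that proof to a fresh set variable — legitimate because every axiom of $\RCA_0 + \RT^2_2$, in particular $\RT^2_2$, has no distinguished ``base set'' — would give $\RCA_0 + \RT^2_2 \vdash \ind\Sigma^0_2$, contradicting \cite{cholak-jockusch-slaman}. By compactness a finitely axiomatized $U \subseteq \RCA_0 + \RT^2_2$ with $U \vdash \bd\Sigma^0_2$ already satisfies $U + \neg\ind\Sigma_2 \vdash \Delta_2$-$\RT^2_2$. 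Now, working inside $U + \neg\ind\Sigma_2$, the assumption $\Delta_2$-$\RT^2_2$ — via Theorem \ref{thm:rt-on-cut} (applied to an appropriately chosen $\Sigma_2$-definable cut $I$) and Theorem \ref{thm:rt22-axioms}\ref{it:rt22-neg-is1} — lets one verify, for each standard $n$ with a proof of length polynomial in $n$, that $(I, \Cod(M/I))$ is a model of the first $n$ axioms of $\RCA_0 + \RT^2_2 + \neg\ind\Sigma_2$. Since a uniformly polynomially bounded family of proofs of the finite consistency statements $\Con_n(S)$ yields a proof of $\Con(S)$, this gives $U + \neg\ind\Sigma_2 \vdash \Con(\RCA_0 + \RT^2_2 + \neg\ind\Sigma_2)$; as $U + \neg\ind\Sigma_2$ is a subtheory of $\RCA_0 + \RT^2_2 + \neg\ind\Sigma_2$, G\"odel's second incompleteness theorem contradicts the consistency of the latter. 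I expect the main obstacle to be engineering the $\Sigma_2$-definable cut $I$ so that $(I, \Cod(M/I))$ genuinely models $\RCA_0 + \RT^2_2 + \neg\ind\Sigma_2$ — in particular so that it retains $\Sigma^0_1$-induction and a first-order failure of $\Sigma_2$-induction — together with checking the uniform proof-length bound driving the compression step; and the companion remark that this argument fails for ``slightly weaker'' sentences than $\varphi$ corresponds exactly to the fact that replacing $\Delta_2$-$\RT^2_2$ by a strictly weaker first-order consequence (such as $\mathrm{C}\Sigma_3$, cf.\ Corollary \ref{cor:rt22-cs2}) no longer lets one verify the full $\RT^2_2$ axiom inside $(I, \Cod(M/I))$, so no speedup arises.
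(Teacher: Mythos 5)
Your reduction of the first non-implication to the ``Moreover'' part, and your model $(N,\Delta_1\textrm{-}\mathrm{Def}(N))$ for the second non-implication, are essentially the paper's own arguments and are fine. The problem is your proof of the ``Moreover'' part, which contains a fatal step: the claim that ``a uniformly polynomially bounded family of proofs of the finite consistency statements $\Con_n(S)$ yields a proof of $\Con(S)$'' is false. By Pudl\'ak's theorem, \emph{every} consistent finitely axiomatized sequential theory $T$ has $\mathrm{poly}(n)$-size proofs of $\Con_n(T)$ for all standard $n$, yet by G\"odel's second incompleteness theorem it does not prove $\Con(T)$; so short proofs of finite consistency statements can never be amalgamated into a proof of full consistency, and your intended contradiction with G\"odel II evaporates. (There is also a secondary issue: ``$(I,\Cod(M/I))$ satisfies the first $n$ axioms'' is not directly expressible with uniformly short proofs, since $\Cod(M/I)$ is not a set-sized object.)

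The actual argument has to separate \emph{lengths} of proofs in two theories rather than derive an outright inconsistency, and it hinges on an external ingredient you do not use: by \cite{kwy:ramsey-proof-size}, $\RCA_0+\RT^2_2$ has no superpolynomial speedup over $\ind\Sigma_1$ for $\Pi_3$ sentences. Concretely, one shows that $\bd\Sigma_2+\Delta_2\textrm{-}\RT^2_2$ proves the $\Delta_2$-relativization of ``if every infinite set has $k$ elements then it has $2^k$ elements'' (via exponential lower bounds on finite Ramsey numbers), so $\{x: \textrm{every infinite }\Delta_2\textrm{ set has}\ge\exp_x(2)\textrm{ elements}\}$ is a provable cut; Pudl\'ak's cut-shortening then yields $\mathrm{poly}(n)$-size proofs of $\Con_{\exp_n(2)}(\ind\Sigma_1)$ in $\bd\Sigma_2+(\neg\ind\Sigma_2\Rightarrow\Delta_2\textrm{-}\RT^2_2)$ (reasoning by cases on $\ind\Sigma_2$). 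Since proofs of $\Con_{\exp_n(2)}(\ind\Sigma_1)$ in $\ind\Sigma_1$ grow nonelementarily and $\RCA_0+\RT^2_2$ cannot superpolynomially speed these up, $\RCA_0+\RT^2_2$ (which proves $\bd\Sigma_2$) cannot prove the single sentence $\neg\ind\Sigma_2\Rightarrow\Delta_2\textrm{-}\RT^2_2$. Without this length-of-proofs comparison your approach cannot be repaired along the lines you propose.
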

\begin{proof}
The fact that ${\RCA_0} + {\bd\Sigma^0_2} + {\neg \ind \Sigma^0_2} + {\Delta^0_2}$-$\mathrm{RT}^2_2$ does not prove 
$\RT^2_2$ is witnessed by any structure of the form $(M, \Delta_1\textrm{-}\mathrm{Def}(M))$,
where $M \models \bd \Sigma_2$ has $\Sigma_2$-definable $\omega$ and $(\omega,\mathrm{SSy}(M)) \models \RT^2_2$.
By Lemma \ref{lem:delta02-rt22}\ref{it:delta02-rt22-on-cut}, 
such a structure satisfies $\Delta^0_2$-$\mathrm{RT}^2_2$,
but by Lemma \ref{lem:jockusch}\ref{it:jockusch-1} it cannot satisfy $\RT^2_2$.

In the other direction, such a ``quick and dirty'' argument does not seem to be currently available:
of the known constructions producing models of ${\RCA_0} + {\RT^2_2} + {\neg \ind\Sigma^0_2}$,
that of \cite{csy:meta-stable-ramsey, csy:ind-strength-ramsey} 
involves strong constraints on $\mathrm{SSy}(M)$, and that of \cite{py:rt22, ky:ordinal-valued-ramsey} 
does not give a $\Sigma^0_2$-definable $\omega$.
To show that ${\RCA_0} + {\RT^2_2} + {\neg \ind \Sigma^0_2}$ does not imply
$\Delta^0_2$-$\mathrm{RT}^2_2$, it is enough to prove the ``Moreover'' part of the statement, namely:
\[ \RCA_0 + \RT^2_2 \not \vdash \neg \ind\Sigma_2 \Rightarrow \Delta_2\textrm{-}\mathrm{RT}^2_2.\]
This we do by means of a proof speedup argument. 
By \cite[Lemma 3.2]{kwy:ramsey-proof-size}, $\RCA^*_0 + \RT^2_2$
proves the statement ``for every $k$, if every infinite set contains at least $k$ elements, 
then every infinite set contains at least $2^k$ elements''.
It follows immediately that
$\bd \Sigma_2 + \Delta_2\textrm{-}\mathrm{RT}^2_2$ proves 
``for every $k$, if every infinite $\Delta_2$-set contains at least $k$ elements, 
then every infinite $\Delta_2$-set contains at least $2^k$ elements''.
This implies that the definable set 
\[\{x: \textrm{every infinite } \Delta_2 \textrm{-set contains at least } \exp_x(2) \textrm{ elements }\}\]
is a cut in $\bd \Sigma_2 + \Delta_2\textrm{-}\mathrm{RT}^2_2$. 
This in turn implies (cf.~\cite[Theorem 3.4.1]{pudlak:handbook-lengths-of-proofs}) that, 
for each $n \in \omega$, there is a $\mathrm{poly}(n)$-size proof of 
\[\textrm{``every infinite } \Delta_2 \textrm{-set contains at least } \exp_{\exp_n(2)}2 \textrm{ elements''}\]
in $\bd \Sigma_2 + \Delta_2\textrm{-}\mathrm{RT}^2_2$. 
But by Lemma \ref{lem:d2-set-con-is1} and the fact that the exponential function dominates
every polynomial, $\ind \Sigma_1$ proves:
\begin{multline*}
\forall x\textrm{ \big{[}``every infinite } \Delta_2 \textrm{-set contains at least } \exp_{\exp_{x+1}(2)}2 \textrm{ elements''}
\\ \Rightarrow \Con_{\exp_x(2)}(\ind \Sigma_1) \big{]}.
\end{multline*}
Thus, for each standard $n$ there is a 
$\mathrm{poly}(n)$-size proof of $\Con_{\exp_n(2)}(\ind \Sigma_1)$ in 
$\bd \Sigma_2 + \Delta_2\textrm{-}\mathrm{RT}^2_2$. 

Reasoning by cases, we can show that also  
${\bd \Sigma_2} + {(\neg \ind\Sigma_2 \Rightarrow \Delta_2\textrm{-}\mathrm{RT}^2_2)}$ proves 
$\Con_{\exp_n(2)}(\ind \Sigma_1)$ in $\mathrm{poly}(n)$-size.
Indeed, either $\ind \Sigma_2$ holds, in which case we simply have $\Con(\ind \Sigma_1)$, 
or $\ind \Sigma_2$ fails, in which case we have $\Delta_2\textrm{-}\mathrm{RT}^2_2$ 
and we can use the proof of $\Con_{\exp_n(2)}(\ind \Sigma_1)$ mentioned in the previous paragraph.

However, the size of the smallest proof of $\Con_{\exp_n(2)}(\ind \Sigma_1)$ in $\ind \Sigma_1$ 
grows nonelementarily in $n$ \cite[Theorem 7.2.2]{pudlak:handbook-lengths-of-proofs}, and by \cite{kwy:ramsey-proof-size},
$\RCA_0 + \RT^2_2$ has no superpolynomial proof speedup over $\ind \Sigma_1$
w.r.t.~proofs of $\Pi_3$ sentences. 
Thus, the size of the smallest proof of $\Con_{\exp_n(2)}(\ind \Sigma_1)$ in $\RCA_0 + \RT^2_2$
also grows nonelementarily in $n$. 
Since $\bd \Sigma_2 + (\neg \ind\Sigma_2 \Rightarrow \Delta_2\textrm{-}\mathrm{RT}^2_2)$
is axiomatized by a single sentence, and $\RCA_0 + \RT^2_2$ proves $\bd \Sigma_2$,
it follows that it cannot prove $\neg \ind\Sigma_2 \Rightarrow \Delta_2\textrm{-}\mathrm{RT}^2_2$.
\end{proof}

Thus, the statement $\neg \ind\Sigma_2 \Rightarrow \Delta_2\textrm{-}\mathrm{RT}^2_2$ cannot be used to 
witness the potential nonconservativity of $\RT^2_2$ over $\bd\Sigma_2$. 
However, our argument for this, in addition to being somewhat roundabout, made use of the fact that
$\RCA^*_0 + \RT^2_2$ proves ``for every $k$, if every infinite set contains at least $k$ elements, 
then every infinite set contains at least $2^k$ elements'', which is shown using exponential
lower bounds on finite Ramsey numbers. 
Thus the argument is no longer applicable
to various apparently slight weakenings of $\neg \ind\Sigma_2 \Rightarrow \Delta_2\textrm{-}\mathrm{RT}^2_2$,
for instance to statements in which $\RT^2_2$ is replaced by a restriction to colourings 
for which finite Ramsey numbers are polynomial.

As an illustration, we mention two weakenings of $\neg \ind\Sigma_2 \Rightarrow \Delta_2\textrm{-}\mathrm{RT}^2_2$
whose status is open and seems intriguing.

\begin{question}
Does $\RCA_0 + \RT^2_2$ prove one of the following the $\Pi_5$ statements:
\begin{enumerate}[(a)]
\item $\neg \ind\Sigma_2 \Rightarrow \Delta_2\textrm{-}\mathrm{CAC}$: if $\neg \ind \Sigma_2$, 
then every $\Delta_2$-definable partial order on $[\N]$ contains 
an infinite $\Delta_2$-definable chain 
or an infinite $\Delta_2$-definable antichain'',
\item ``if $\neg \ind \Sigma_2$, 
then for every $\Delta_1$-definable $2$-colouring of $[\N]^n$ there is a $\Delta_2$-definable infinite homogeneous set''?
\end{enumerate}
Does $\RCA_0 + \bd \Sigma^0_2$ prove the statement in (b)?
\end{question}

\paragraph{Acknowledgment.} We are very grateful to Tin Lok Wong for many discussions and for making some observations that inspired the work presented in this paper. In fact, we felt that Wong's contribution to the paper are such that he should be listed as a coauthor, but he does not.

We are also grateful to Marta Fiori Carones for a thorough reading of the draft and number of useful comments.

\bibliographystyle{plain}
\bibliography{leszek2014}

\appendix

\section{Proof of Lemma \ref{lem:d2-set-con-is1}}

\begin{consistency-lemma}
There exists a polynomial $p$ such that $\ind\Sigma_1$ proves:
\begin{multline*}
\forall x\textrm{ \big{[}``every infinite } \Delta_2 \textrm{-set contains at least } {\exp_{p(x)}(2)} \textrm{ elements''}
\\ \Rightarrow \Con_x(\ind \Sigma_1) \big{]},
\end{multline*}
where $\Con_{x}(T)$ means that there is no inconsistency proof in $T$ containing fewer than $x$ symbols.
\end{consistency-lemma}

\begin{proof}
We assume that our proof system is a Tait-style calculus (see e.g.~\cite[Section 4.1]{beklemishev:reflection-survey}).
Thus, $\land, \lor, \neg$ are our only connectives,
with $\neg$ allowed to appear explicitly only in front of atoms
and negation otherwise defined recursively using the De Morgan laws.
The proof lines are \emph{cedents}, or finite sets of formulas interpreted as disjunctions.
The logical axioms are cedents of the form $\Gamma, \psi, \neg \psi$ for $\psi$ atomic, 
as well as analogous cedents corresponding to the equality axioms
(the need to allow the arbitrary set of formulas $\Gamma$ to appear in axioms arises because there is no weakening rule). 
The most important rules from our perspective are the rules for introducing conjunctions and quantifiers: 
\begin{multicols}{3}
\begin{prooftree}
  \AxiomC{$\Gamma, \psi_1$}
  \AxiomC{$\Gamma, \psi_2$}
  \RightLabel{\scriptsize{$(\land)$},}
  \BinaryInfC{$\Gamma, \psi_1 \land \psi_2$}
 \end{prooftree}

\begin{prooftree}
  \AxiomC{$\Gamma, \psi(t)$}
  \RightLabel{\scriptsize{$(\exists)$},}
  \UnaryInfC{$\Gamma, \exists w \, \psi$}
 \end{prooftree}

 \begin{prooftree}
  \AxiomC{$\Gamma, \psi(a)$}
  \RightLabel{\scriptsize$(\forall)$,}
  \UnaryInfC{$\Gamma, \forall w\, \psi$}
 \end{prooftree}  
\end{multicols} 
\noindent where in the $(\exists)$ rule $t$ must be a term that is substitutable for $w$ in $\psi$,
and in the $(\forall)$ rule $a$ must be an eigenvariable, 
i.e.~a free variable that does not appear anywhere in the conclusion
of the rule. There are also natural disjunction introduction rules and the cut rule.

We may assume that $\ind\Sigma_1$ is axiomatized by finitely many sentences $\gamma_1,\ldots,\gamma_n$, where each $\gamma_i$ has the form 
\begin{multline*}
\forall \bar v\, \exists x\, \exists \bar y\, \forall \bar z\,\forall \bar z'\,  \\
[x<v_{1}\wedge [\neg \delta_i(0, \bar z, \bar v) \lor \delta_i(v_1, \bar y, \bar v) \lor (\delta_{i}(x,\bar y,\bar v) \land \neg \delta_i(x\!+\!1,\bar z', \bar v))]],
\end{multline*}
with $\delta_i$ bounded. (Using a different finite axiomatization would shorten proofs in $\ind\Sigma_1$ by at most a constant additive factor, and using the typical axiomatization of $\ind\Sigma_1$ as a scheme would shorten proofs at most polynomially.)

By the cut elimination theorem, which formalizes in (a fragment of) $\ind \Sigma_1$,
if there is an inconsistency proof from $\ind \Sigma_1$ of size at most $x$,
then for some fixed polynomial $p$ there is a cut-free proof of the cedent
\[\neg \gamma_1,\ldots,\neg \gamma_n,\]
of size at most ${\exp_{p(x)}(2)}$. Working in $\ind \Sigma_1$,
let $k$ be such that every infinite $\Delta_2$-set contains at least ${\exp_{p(k)}(2)}$
elements. Let $m$ stand for ${\exp_{p(k)}(2)}$.
We will prove that there is no cut-free proof
of $\neg \gamma_1,\ldots,\neg \gamma_n$ of size at most $m$,
which will imply $\Con_k(\ind\Sigma_1)$.

Assume to the contrary that there is such a cut-free proof,
and let the lines of the proof be $C_1,\ldots,C_\ell$;
note that $\ell \le m$. 
For each $j = 1, \ldots, \ell$, let the \emph{negations of} the formulas
in $C_j$ be $\xi_{j,1}, \ldots, \xi_{j,r_j}$. 
Note that $r_\ell = n$, each $\xi_{\ell,i}$ is $\gamma_i$,
and, by the subformula property of cut-free proofs, each $\xi_{j,r}$
is a subformula of one of the $\psi_i$'s.
As usual in such a context, we regard
$\varphi(t)$ as a subformula of $\mathrm{Q}x\,\varphi$
for $\mathrm{Q}$ a quantifier.

Define an infinite sequence of numbers by: 
\begin{align*}
d_0        & = 0, \\
d_{j+1} & = \textrm{least } d > d_j \textrm{ s.t., if } u \textrm{ is the smallest number s.t.~each term}  \\
             & ~~~~ \textrm{with }{\le}~m \textrm{ symbols evaluated on arguments }{\le}~d_j \textrm{ has value }{\le}~u,  \\
             & ~~~~ \textrm{then }d \ge u \textrm{ and, for each }i =1,\ldots,n, \textrm{ each } \bar v \textrm{ and } x: \\ 
             & ~~~~ \!\max(x,\max(\bar v)) \le {u} \land \exists \bar y\, \delta_i(x,\bar y, \bar v)
             \Rightarrow \exists \bar y\,(\max(\bar y) \le d \land \delta_i(x,\bar y, \bar v)).
\end{align*}
Let $D$ consist of all numbers that appear as some $d_j$. Note that provably in $\ind \Sigma_1$, both $D$ and the complement of $D$ are $\Sigma_2$-definable, so $D$ is a $\Delta_2$-set, and $D$ is infinite. 
By our assumption, there exists an $\ell$-element finite subset of $D$.
W.l.o.g., we may assume that the elements of this subset are $d_0, \ldots, d_{\ell-1}$.

We claim that the following statement $\eta(s)$ can be proved by $\Pi_1$ induction on $s = 0,\ldots,\ell\!-\!1$:
\begin{center}
\begin{minipage}{0.9\textwidth}
``there exist $j \le \ell - s$ and an assignment $\alpha$ of values $\le d_{s}$ to the free variables in $C_{j}$ such that, for every $\xi_{j,r}$ that is $\Sigma_2$, there is an assignment of values $\le d_{s}$ to the variables (if any) in the unbounded existential quantifier block of $\xi_{j,r}$ that together with $\alpha$ makes the $\Pi_1$ part of $\xi_{j,r}$ satisfied.''
\end{minipage}
\end{center}
Note that $\eta(s)$ is indeed a $\Pi_1$ statement (provably in $\bd\Sigma_1 + \exp$), because all the quantifiers preceding the definition of satisfaction for $\Pi_1$ formulas are bounded. Moreover, $\eta(0)$ is true, because it is witnessed by $j = \ell$ and the empty assignment, while $\eta( \ell-1)$ is false, because $C_1$ has to be a logical axiom, so an assignment witnessing the statement at $j = 1$ would have to satisfy two mutually contradictory quantifier-free formulas or falsify an equality axiom. Therefore, if the induction step goes through for $\eta(s)$, we obtain the required contradiction.

The induction step splits into cases depending on the rule used to derive $C_j$, where $j$ witnesses $\eta(s)$.
We consider the nontrivial cases, namely the ones corresponding to $(\land), (\exists)$, and $(\forall)$ inferences.

If $C_j$ was derived using the $(\land)$ rule, then $C_j$ is $\Gamma, \psi_1 \land \psi_2$, where $\psi_1 \land \psi_2$
is the (necessarily $\Delta_0$) principal formula of the inference used to derive $C_j$. Take the assignment $\alpha$ witnessing $\eta(s)$ at $j$, and let $j' < j$ be such that $C_j$ is $\Gamma, \psi_b$ for $b \in \{1,2\}$ such that $\alpha$ satisfies $\neg \psi_b$. This $j'$ and the unchanged assignment $\alpha$ witness $\eta(s+1)$.

If $C_j$ was derived by an $(\exists)$ inference, then $C_j$ is $\Gamma, \exists w\, \psi$ and $C_{j'}$ is 
$\Gamma, \psi(t)$ for some $j'<j$. 
In this case, we first extend a given assignment $\alpha$ witnessing $\eta(s)$ at $j$ 
to an assignment $\alpha'$ by letting all variables that are free in $C_{j'}$ but not in $C_j$ have value $0$. 
If $\psi(t)$ is not $\Pi_2$ (in which case $\neg \psi(t)$ is a $\Pi_3$ but not $\Sigma_2$ 
subformula of one of the induction axioms $\gamma_i$)
or $\exists w\,\psi$ is $\Sigma_1$ (in which case $\forall w\, \neg \psi$ is $\Pi_1$ and satisfied under $\alpha$,
so $\neg \psi(t)$ is satisfied under $\alpha'$),
this is all we need to do in order to ensure that $j', \alpha'$ witness $\eta(s+1)$.
The remaining case is when $\psi(t)$ is $\Pi_2$ but $\exists w\, \psi$ is not.
In that situation, $\neg\psi(t)$ arises from one of the $\gamma_i$'s by deleting the initial universal quantifier block 
and substituting some terms $\bar t$ for the variables $\bar v$ appearing in that block. 
We know that $\alpha'$ satisfies $\neg\psi(t)$ (because $\gamma_i$ is true),
but we also have to argue that we can witness the existential quantifiers $\es{x}{v_1}\exists \bar y$ 
in $\neg\psi(t)$ by numbers below $d_{s+1}$.
However, we know that we can find a value for $x$ below $\alpha'(t_1)$,
which is the value of a term with at most $m$ symbols on arguments below $d_s$. 
Thus, by the definition of $d_{s+1}$, we can also find values for $\bar y$ corresponding to $x$
in such a way that the maximum of these values is at most $d_{s+1}$. 

Finally, if $C_j$ was derived by a $(\forall)$ inference, then $C_j$ is $\Gamma, \forall w\, \psi$ and $C_{j'}$ is 
$\Gamma, \psi(a)$ for some $j'<j$ and some variable $a$ not appearing in $C_j$. 
Let $\alpha$ be an assignment witnessing $\eta(s)$ at $j$.
There are two subcases to consider, depending on whether $\exists w\, \neg \psi$
is an unbounded $\Sigma_2$ formula or a $\Delta_0$ formula. In the former case,
we know from the inductive assumption that $\alpha$ satisfies $\exists w\, \neg \psi$
and that there is a number $e \le  d_s$ witnessing the quantifier $\exists w$.
Then $j'$ and the assignment $\alpha \cup \{a:=e\}$ witness that $\eta(s+1)$ holds.
In the latter case, we know that $\alpha$ satisfies $\exists w\, \neg \psi$,
and we also know that any number $e$ witnessing the quantifier $\exists w$ must be bounded
by the value of a term appearing in $C_j$ (thus having at most $m$ symbols)
evaluated at elements of the range of $\alpha$, all of which are below $d_s$. 
By definition of $d_{s+1}$, this means that $e \le d_{s+1}$,
so again $j'$ and $\alpha \cup \{a:=e\}$ witness that $\eta(s+1)$ holds.
\end{proof}

\end{document}